\DeclareMathAlphabet{\pazocal}{OMS}{zplm}{m}{n}
\newtheorem{Th}{Theorem}
\newtheorem{Lem}[Th]{Lemma}
\newtheorem{prop}[Th]{Proposition}
\newtheorem{proposition}[Th]{Proposition}
\newtheorem{corollary}[Th]{Corollary}
\newtheorem{definition}[Th]{Definition}
\newtheorem*{question}{Open question}
\newcommand{\F}{{\mathbb{F}}} 
\newcommand{\cmark}{\text{\ding{51}}}
\newcommand{\xmark}{\text{\ding{55}}}
\def\remove#1{}
\title{Conservative algebras of $2$-dimensional algebras, IV}
\author[A. Fern\'andez Ouaridi]{Amir Fern\'andez Ouaridi}
\address{A. Fern\'andez Ouaridi: 
CMUC, Universidade de Coimbra, Coimbra, Portugal;
Universidad de Cádiz, Cádiz, Spain.
} 
\email{amir.fernandez.ouaridi@gmail.com}
\author[I. Kaygorodov]{Ivan Kaygorodov}
\address{I. Kaygorodov: CMA-UBI, Universidade da Beira Interior, Covilhã, Portugal;
Moscow Center for Fundamental and Applied Mathematics, Moscow,   Russia; 
Saint Petersburg State University, Russia.
} 
\email{kaygorodov.ivan@gmail.com}
\author[C. Mart\'{\i}n]{C\'andido Mart\'{\i}n Gonz\'alez}
\address{C. Mart\'{\i}n: Departamento de \'Algebra Geometr\'{\i}a y Topolog\'{\i}a, Fa\-cultad de Ciencias, Universidad de M\'alaga, Campus de Teatinos s/n. 29071 M\'alaga. Spain.} \email{candido\_m@uma.es}
\begin{document}

\maketitle

\begin{abstract}
    The notion of conservative algebras appeared in a paper of Kantor in 1972. Later, he defined the conservative algebra $W(n)$ of all algebras (i.e. bilinear maps) on the $n$-dimensional vector space. If $n>1$, then the algebra $W(n)$  does not belong to any well-known class of algebras (such as associative, Lie, Jordan, or Leibniz algebras). It looks like that $W(n)$ in the theory of conservative algebras plays a similar role with the role of $\mathfrak{gl}_n$ in the theory of Lie algebras. Namely, an arbitrary conservative algebra can be obtained  from a universal algebra $W(n)$ for some $n \in \mathbb{N}.$    
    The present paper is a  part of a series of papers, which dedicated to the study of the algebra $W(2)$ and its principal subalgebras.
\end{abstract}

\medskip

{\bf Keywords:} bilinear maps, conservative algebra,  contraction, identities.

\ 

{\bf  MSC2020: }  17A30\footnote{Corresponding author: Ivan Kaygorodov (kaygorodov.ivan@gmail.com)}

\medskip

\section*{Introduction} 

A multiplication on a vector space $W$ is a bilinear mapping $W\times W \to W$. We denote by $(W,P)$ the algebra with underlining space $W$ and multiplication $P$. Given a vector space $W$, a linear mapping ${\bf A}:W\rightarrow W$, and a bilinear mapping ${\bf B}:W\times W\to W$, we can define a multiplication $[ {\bf A},{\bf B} ] :W\times W\to W$ by the formula
$$[ {\bf A}, {\bf B} ] (x,y)={\bf A}({\bf B}(x,y))-{\bf B}({\bf A}(x),y)-{\bf B}(x,{\bf A}(y))$$
for $x,y\in W$. For an algebra ${\bf A}$ with a multiplication $P$ and $x\in {\bf A}$ we denote by $L_x^P$ the operator of left multiplication by $x$. If the multiplication $P$ is fixed, we write $L_x$ instead of $L_x^P$.

In 1990 Kantor~\cite{Kantor90} defined the multiplication $\cdot$ on the set of all algebras (i.e. all multiplications) on the $n$-dimensional vector space $V_n$ as follows:
$${\bf A}\cdot {\bf B} = [L_{e}^{\bf A},{\bf B}],$$
where ${\bf A}$ and ${\bf B}$ are multiplications and $e\in V_n$ is some fixed vector.
Let $W(n)$  denote the algebra of all algebra structures on $V_n$ with multiplication defined above.
If $n>1$, then the algebra $W(n)$ does not belong to any well-known class of algebras (such as associative, Lie, Jordan, or Leibniz algebras). The algebra $W(n)$ turns out to be a conservative algebra (see below).

In 1972 Kantor~\cite{Kantor72} introduced conservative algebras as a generalization of Jordan algebras
 (also, see a good written survey about the study of conservative algebras and superalgebras \cite{pp20}). Namely, an algebra ${\bf A}=(W,P)$ is called a {\it conservative algebra} if there is a new multiplication $F:W\times W\rightarrow W$ such that
\begin{eqnarray}\label{tojd_oper}
[L_b^P, [L_a^P,P]]=-[L_{F(a,b)}^P,P]
\end{eqnarray}
for all $a,b\in W$. In other words, the following identity holds for all $a,b,x,y\in W$:
\begin{multline}\label{tojdestvo_glavnoe}
b(a(xy)-(ax)y-x(ay))-a((bx)y)+(a(bx))y+(bx)(ay)\\
-a(x(by))+(ax)(by)+x(a(by))
=-F(a,b)(xy)+(F(a,b)x)y+x(F(a,b)y).
\end{multline}
The algebra $(W,F)$ is called an algebra {\it associated} to ${\bf A}$.
The main subclass of conservative algebras is the variety of terminal algebras, 
which defined by the identity (\ref{tojdestvo_glavnoe}) with 
$F(a,b)=\frac{1}{3}(2 ab+ ba).$
It includes the varieties of Leibniz and Jordan algebras as subvarieties.

Let us recall some well-known results about conservative algebras. In~\cite{Kantor72} Kantor classified all simple conservative algebras and triple systems of second-order and defined the class of terminal algebras as algebras satisfying some certain identity. He proved that every terminal algebra is a conservative algebra and classified all simple finite-dimensional terminal algebras with left quasi-unit over an algebraically closed field of characteristic zero~\cite{Kantor89term}. Terminal trilinear operations were studied in~\cite{Kantor89tril}. After that, Cantarini and Kac classified simple finite-dimensional (and linearly compact) super-commutative and super-anticommutative conservative superalgebras and some generalization of these algebras (also known as ``rigid'' or quasi-conservative superalgebras) over an algebraically closed field of characteristic zero \cite{kac_can_10}.
The classification of all $2$-dimensional conservative and rigid (in sense of Kac-Cantarini) algebras is given in \cite{cfk};
and also, the algebraic and geometric classification of nilpotent low dimensional terminal algebras is given in \cite{kkp19,kks19}.

The algebra $W(n)$ plays a similar role in the theory of conservative algebras as the Lie algebra of all $n\times n$ matrices $\mathfrak{gl}_n$ plays in the theory of Lie algebras. Namely, in~\cite{Kantor90}  Kantor considered the category $\mathcal{S}_n$ whose objects are conservative algebras of non-Jacobi dimension $n$. It was proven that the algebra $W(n)$ is the universal attracting object in this category, i.e., for every $M\in\mathcal{S}_n$ there exists a canonical homomorphism from $M$ into the algebra $W(n)$. In particular, all Jordan algebras of dimension $n$ with unity are contained in the algebra
$W(n)$. The same statement also holds for all noncommutative Jordan algebras of dimension $n$ with unity.
Some properties of the product in the algebra $W(n)$ were studied in \cite{kantorII,kay}.
The universal conservative superalgebra was constructed in \cite{kpp19}.
The study of low dimensional conservative algebras was started in \cite{kaylopo}.
The study of properties of $2$-dimensional algebras is also one of popular topic in non-associative algebras (see, for example, \cite{Giambruno,kayvo2,GR11,GR12,GR13}) and as we can see the study of properties of the algebra $W(2)$ could give some applications on the theory of $2$-dimensional algebras. So, from the description of idempotents of the algebra $W(2)$ it was received an algebraic classification of all $2$-dimensional algebras with left quasi-unit \cite{kayvo}. 
Derivations and subalgebras of codimension 1 of the algebra $W(2)$ and of its principal subalgebras $W_2$ and $S_2$ were described \cite{kaylopo}.
Later, the automorphisms, one-sided ideals, idempotents,  local (and $2$-local) derivations and automorphisms of 
$W(2)$ and its principal subalgebras were described in \cite{arzikulov,kayvo}.
Note that $W_2$ and $S_2$ are simple terminal algebras with left quasi-unit from the classification of Kantor \cite{Kantor89term}.
The present paper is devoted to continuing the study of properties of $W(2)$ and its principal subalgebras. Throughout this paper, unless stated otherwise, $\mathbb{F}$ denotes a field of characteristic zero. All algebras are defined over $\mathbb{F}$.

The multiplication table of $W(2)$ is given by the following table.
 
\begin{center}
\begin{longtable}{c|c|c|c|c|c|c|c|c|}
      & $e_1$ & $e_2$ & $e_3$ & $e_4$ & $e_5$ & $e_6$ & $e_7$ & $e_8$ \\ \hline
$e_1$ & $-e_1$ & $-3e_2$ & $e_3$ & $3e_4$ & $-e_5$ & $e_6$ & $e_7$ & $-e_8$ \\ \hline
$e_2$ & $3e_2$ & $0$ & $2e_1$ & $e_3$ & $0$ & $-e_5$ & $e_8$ & $0$ \\ \hline
$e_3$ & $-2e_3$ & $-e_1$ & $-3e_4$ & $0$ & $e_6$ & $0$ & $0$ & $-e_7$ \\ \hline
$e_4$ & $0$ & $0$ & $0$ & $0$ & $0$ & $0$ & $0$ & $0$ \\ \hline
$e_5$ & $-2e_1$ & $-3e_2$ & $-e_3$ & $0$ & $-2e_5$ & $-e_6$ & $-e_7$ & $-2e_8$ \\ \hline
$e_6$ & $2e_3$ & $e_1$ & $3e_4$ & $0$ & $-e_6$ & $0$ & $0$ & $e_7$ \\ \hline
$e_7$ & $2e_3$ & $e_1$ & $3e_4$ & $0$ & $-e_6$ & $0$ & $0$ & $e_7$ \\ \hline
$e_8$ & $0$ & $e_2$ & $-e_3$ & $-2e_4$ & $0$ & $-e_6$ & $-e_7$ & $0$ \\ \hline
\end{longtable}
\end{center}

 \section{In\"on\"u-Wigner contractions of $W(2)$ and its subalgebras}
 
 The class of conservative algebras includes the variety of terminal algebras, 
 which includes all Leibniz and Jordan algebras. 
 On the other hand, the variety of terminal algebras is "dual" to the variety of commutative algebras 
 (in the sense of generalized TTK-functor).
 The algebra $W(2)$ is not terminal, but its principal subalgebras $W_2$ and $S_2$ are terminal.
 Our main aim is to try to understand how the algebra $W(2)$ "far"  from terminal algebras.
 For a particular answer for our question, we will consider contractions to some certain subalgebras of $W(2)$
 and study its relations with the variety of terminal algebras.
 The {\it standard In\"on\"u-Wigner contraction} was introduced in \cite{IW}. 
 We will call it {\it IW contraction} for short.
 \begin{definition}
    Let $\mu,\chi$ represent algebras ${\bf A}$ and ${\bf B}$ respectively defined on a vector space $V$. Suppose that there are some elements $E_i^t\in V$ $(1\le i\le n$, $t\in  {\mathbb F}^*)$ such that $E^t=(E_1^t,\dots,E_n^t)$ is a basis of $V$ for any $t\in{\mathbb F}^*$ and the structure constants of $\mu$ in this basis are $\mu_{i,j}^k(t)$ for some polynomials $\mu_{i,j}^k(t)\in{\mathbb F}[t]$. If $\mu_{i,j}^k(0)=\chi_{i,j}^k$ for all $1\le i,j,k\le n$, then ${\bf A}\to {\bf B}$. To emphasize that the {\it parametrized basis} $E^t=(E_1^t,\dots,E_n^t)$ $(t\in{\mathbb F}^*)$ gives a degeneration between the algebras represented by the structures $\mu$ and $\chi$, we will write $\mu\xrightarrow{E^t}\chi$.
  Suppose that ${\bf A}_0$ is an $(n-m)$-dimensional subalgebra of the $n$-dimensional algebra ${\bf A}$ and $\mu$ is a structure representing ${\bf A}$ such that ${\bf A}_0$ corresponds to the subspace $\langle e_{m+1},\dots,e_n\rangle$ of $V$.
Then $\mu\xrightarrow{(te_1,\dots,te_m,e_{m+1},\dots,e_n)}\chi$ for some $\chi$ and the algebra ${\bf B}$ represented by $\chi$ is called the IW contraction of ${\bf A}$ with respect to ${\bf A}_0$.
\end{definition}

\subsection{IW contraction of $W(2)$}

\subsubsection{The algebra $\overline{W(2)}$}
The description of all subalgebras of codimension $1$ for the algebra $W(2)$ is given in \cite{kaylopo}.
Namely, $W(2)$ has only one $7$-dimensional subalgebra. It is generated by elements $e_1, e_3, e_4, e_5,e_6, e_7, e_8,$ and it is terminal. Let us consider the IW contraction $W(2) \xrightarrow{(e_1, t e_2, e_3, e_4, e_5, e_6,e_7,e_8)} \overline{W(2)}.$ It is easy to see, that the multiplication table of $\overline{W(2)}$ is given by the following table. 

\begin{center}
\begin{longtable}{c|c|c|c|c|c|c|c|c|}
      & $e_1$ & $e_2$ & $e_3$ & $e_4$ & $e_5$ & $e_6$ & $e_7$ & $e_8$ \\ \hline
$e_1$ & $-e_1$ & $-3e_2$ & $e_3$ & $3e_4$ & $-e_5$ & $e_6$ & $e_7$ & $-e_8$ \\ \hline
$e_2$ & $3e_2$ & $0$ & $0$ & $0$ & $0$ & $0$ & $0$ & $0$ \\ \hline
$e_3$ & $-2e_3$ & $0$ & $-3e_4$ & $0$ & $e_6$ & $0$ & $0$ & $-e_7$ \\ \hline
$e_4$ & $0$ & $0$ & $0$ & $0$ & $0$ & $0$ & $0$ & $0$ \\ \hline
$e_5$ & $-2e_1$ & $-3e_2$ & $-e_3$ & $0$ & $-2e_5$ & $-e_6$ & $-e_7$ & $-2e_8$ \\ \hline
$e_6$ & $2e_3$ & $0$ & $3e_4$ & $0$ & $-e_6$ & $0$ & $0$ & $e_7$ \\ \hline
$e_7$ & $2e_3$ & $0$ & $3e_4$ & $0$ & $-e_6$ & $0$ & $0$ & $e_7$ \\ \hline
$e_8$ & $0$ & $e_2$ & $-e_3$ & $-2e_4$ & $0$ & $-e_6$ & $-e_7$ & $0$ \\ \hline
\end{longtable}
\end{center}
After a carefully checking of the dimension of the algebra of derivation of $\overline{W(2)},$ we have 
$\dim \mathfrak{Der} (\overline{W(2)})=3.$ 
Since
$\dim \mathfrak{Der} (W(2))=2,$ 
it follows that the degeneration 
$W(2) \to \overline{W(2)}$ is primary, that is, there is no algebra ${\bf A}$ such that ${W(2)}\to {\bf A}$ and ${\bf A}\to \overline{W(2)}$, where ${\bf A}$ is neither isomorphic to $W(2)$ or $\overline{W(2)}$ (see \cite{degs22}).

\begin{Lem}
The algebra $\overline{W(2)}$ is a non-terminal conservative non-simple algebra.

\end{Lem}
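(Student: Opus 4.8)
The assertion breaks into three parts, which I would treat in order of increasing difficulty. \emph{Non-simplicity} is immediate from the multiplication table: every product $e_2 e_i$ and $e_i e_2$ lies in $\F e_2$ (explicitly $e_2 e_1 = 3e_2$, $e_1 e_2 = -3e_2$, $e_5 e_2 = -3e_2$, $e_8 e_2 = e_2$, and all remaining products involving $e_2$ vanish), so $\F e_2$ is a nonzero proper two-sided ideal of $\overline{W(2)}$ and the algebra is not simple.

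\emph{Non-terminality.} By definition $\overline{W(2)}$ is terminal precisely when the identity (\ref{tojdestvo_glavnoe}) holds with $F(a,b) = \frac{1}{3}(2ab + ba)$; since both sides of (\ref{tojdestvo_glavnoe}) are multilinear in $a,b,x,y$, it is enough to test basis vectors. I would plug in $a = e_8$, $b = e_2$, $x = y = e_1$, so that every product that occurs stays in $\langle e_1, e_2, e_8\rangle$: a short computation gives $0$ for the left-hand side, whereas $F(e_8,e_2) = \frac{2}{3}e_2$ makes the right-hand side equal to $2e_2 \ne 0$, so the terminal identity fails. (Equivalently, one can note that $\langle e_1,e_2,e_5,e_8\rangle$ is already a $4$-dimensional non-terminal subalgebra and that terminal algebras form a variety, hence are closed under subalgebras.)

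\emph{Conservativity.} This is the substantive step. Writing $\overline{P}$ for the multiplication of $\overline{W(2)}$, I would use the following reformulation: $\overline{W(2)}$ is conservative if and only if, for all $a,b$, the operator $[L_b^{\overline{P}},[L_a^{\overline{P}},\overline{P}]]$ lies in the linear span of $\{[L_c^{\overline{P}},\overline{P}] : c\in\overline{W(2)}\}$. Indeed, once this pointwise containment is known, composing the bilinear map $(a,b)\mapsto [L_b^{\overline{P}},[L_a^{\overline{P}},\overline{P}]]$ with a fixed linear section of the linear map $c\mapsto -[L_c^{\overline{P}},\overline{P}]$ produces a genuinely bilinear operation $F$ with $[L_b^{\overline{P}},[L_a^{\overline{P}},\overline{P}]]=-[L_{F(a,b)}^{\overline{P}},\overline{P}]$, i.e.\ $(\overline{W(2)},F)$ satisfies (\ref{tojd_oper}), hence (\ref{tojdestvo_glavnoe}). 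By multilinearity it suffices to verify the span condition (equivalently, to solve the resulting finite linear systems for the unknowns $F(e_i,e_j)$) with $a,b$ ranging over the basis $e_1,\dots,e_8$. To organize this I would exploit the decomposition $\overline{W(2)} = B\oplus\F e_2$, where $B = \langle e_1,e_3,e_4,e_5,e_6,e_7,e_8\rangle$ is the $7$-dimensional terminal subalgebra of $W(2)$ — unchanged by the contraction, which only rescales $e_2$ and does not touch any $e_i e_j$ with $i,j\ne 2$: for $a,b\in B$ the identity (\ref{tojd_oper}) already holds on inputs from $B$ with $F(a,b)$ the terminal operation of $B$, and $\F e_2$ is an ideal with $e_2 e_2 = 0$, so the genuinely new work is confined to the ``$e_2$-components'' of (\ref{tojd_oper}) together with the values $F(a,e_2)$, $F(e_2,b)$, $F(e_2,e_2)$. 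I expect essentially all of the difficulty to sit exactly here: one must confirm, inside the $64$-dimensional space of bilinear maps on the underlying $8$-dimensional space, that every operator $[L_{e_j}^{\overline{P}},[L_{e_i}^{\overline{P}},\overline{P}]]$ is a linear combination of the operators $[L_{e_k}^{\overline{P}},\overline{P}]$, and then record a consistent bilinear choice of $F$. This is routine but long; the decomposition above is what makes it finite and organized rather than the real conceptual obstacle. (A more conceptual route — contracting an associated operation of $W(2)$ along the same parametrized basis $(e_1,te_2,e_3,\dots,e_8)$ — would require knowing that the associated operation of $W(2)$ can be chosen with $F(e_i,e_j)\in B$ for all $i,j\ne 2$, which is not clear a priori, so I would fall back on the direct computation.)
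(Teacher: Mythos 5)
Your non-simplicity and non-terminality arguments are correct and, if anything, more explicit than the paper's. For non-simplicity the paper exhibits the $6$-dimensional ideal $\langle e_2,e_3,e_4,e_6,e_7,e_8\rangle$, while you use the $1$-dimensional ideal $\F e_2$; your reading of the table is right ($e_2e_1=3e_2$, $e_1e_2=-3e_2$, $e_5e_2=-3e_2$, $e_8e_2=e_2$, all other products with $e_2$ vanish), so both work. For non-terminality the paper only appeals to a direct (computer) verification; your witness $(a,b,x,y)=(e_8,e_2,e_1,e_1)$ checks out: the left-hand side of (\ref{tojdestvo_glavnoe}) is $-9e_2+9e_2+9e_2-9e_2=0$ while $F(e_8,e_2)=\tfrac23 e_2$ gives $2e_2+6e_2-6e_2=2e_2\neq 0$ on the right. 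This is a genuinely useful human-checkable certificate that the paper does not supply.

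The gap is in the conservativity part. Your reformulation — that it suffices to show each operator $[L_b,[L_a,\overline{P}]]$ lies in the span of the $[L_c,\overline{P}]$, after which a linear section of $c\mapsto -[L_c,\overline{P}]$ produces a bilinear $F$ — is valid, and the reduction to a finite linear system over the basis is the right move. But you stop exactly where the substantive content begins: you neither solve the system nor exhibit an associated multiplication, and the decomposition $\overline{W(2)}=B\oplus\F e_2$ only organizes the computation (as you concede, terminality of $B$ controls (\ref{tojd_oper}) only for $a,b,x,y\in B$, so all instances involving $e_2$ still have to be checked, possibly forcing $F|_{B\times B}$ away from the terminal operation of $B$). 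The paper's proof consists precisely of supplying this missing witness: an explicit $*$ supported on $\{e_1,e_2,e_5,e_8\}$ (namely $e_1*e_1=-e_1$, $e_1*e_2=-e_2$, $e_1*e_5=e_5*e_1=-2e_1$, $e_2*e_1=e_2$, $e_2*e_5=e_2*e_8=-e_2$, $e_5*e_2=-2e_2$, $e_5*e_5=-4e_1$, all else zero) together with a direct verification of (\ref{tojdestvo_glavnoe}). Until you either produce such an $F$ or actually run the solvability check, the conservativity claim is asserted rather than proved.
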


\begin{proof}
The subspace 
$\langle e_2,   e_3, e_4, e_6, e_7, e_8 \rangle$ gives a $6$-dimensional   ideal, it gives that  $\overline{W(2)}$ is non-simple.
The non-terminal property is following from the direct verification of the terminal identity (for example, using a modification of the Wolfram code  presented in \cite{kadyrov}).
The conservative property is following from the direct verification of the conservative identity with the additional multiplication $*:$
\begin{longtable}{lllll}

$e_1*e_1=-e_1$ &   $e_1*e_2=-e_2$&  $e_1*e_5=-2 e_1$ & $e_2*e_1=e_2$ & $e_2*e_5=-e_2$  \\ 
 $e_2*e_8=-e_2$ & $e_5*e_1=-2 e_1$ & $e_5*e_2=-2 e_2$  & $e_5*e_5=-4 e_1.$& \\

\end{longtable}
\end{proof}

\begin{Lem}
Let $\mathcal S$ be a subalgebra of $\overline{W(2)}$  of codimension $1,$
the $\mathcal S$ is one of the following conservative subalgebras 
\begin{longtable}{ll}
${\mathcal S}_1 =\langle e_2, e_3, e_4, e_5, e_6, e_7, e_8 \rangle$ &
${\mathcal S}_2 =\langle e_1, e_3, e_4, e_5, e_6, e_7, e_8 \rangle$ \\
${\mathcal S}_5 =\langle e_1, e_2, e_3, e_4, e_6, e_7, e_8 \rangle$ &
${\mathcal S}_{\alpha, \beta} =\langle e_1 +\alpha e_8, e_2, e_3, e_4, e_5+\beta e_8, e_6, e_7 \rangle_{\alpha, \beta \in {\mathbb F}},$
\end{longtable}
where only ${\mathcal S}_1,$ ${\mathcal S}_2$, ${\mathcal S}_{0,0} $ and ${\mathcal S}_{-1,1} $
are terminal.

\end{Lem}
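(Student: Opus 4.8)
The proof splits into three parts: enumerating the codimension-$1$ subalgebras, verifying that each is conservative, and deciding which are terminal. For the enumeration, the plan is to identify a codimension-$1$ subalgebra with the kernel of a nonzero linear functional $\varphi=\sum_{i=1}^{8}c_ie_i^{*}$ on $\overline{W(2)}$. Writing $\gamma_{ij}^{k}$ for the structure constants read off from the table of $\overline{W(2)}$, the hyperplane $\ker\varphi$ is a subalgebra exactly when the bilinear form $(x,y)\mapsto\varphi(xy)$ vanishes on $\ker\varphi\times\ker\varphi$; since the kernel is a hyperplane, this holds precisely when the matrix $M(\varphi)=\bigl(\sum_{k}\gamma_{ij}^{k}c_{k}\bigr)_{i,j}$, which is linear in $c=(c_1,\dots,c_8)^{\top}$, has the form $c\,p^{\top}+q\,c^{\top}$ for some $p,q\in\mathbb{F}^{8}$. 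The table of $\overline{W(2)}$ makes $M(\varphi)$ rigid --- rows $2$ and $4$ are very sparse, rows $6$ and $7$ coincide, and $M(\varphi)_{ij}=0$ whenever $c_i=c_j=0$ --- so imposing this shape and running the short case analysis on which of $c_1,c_2,c_5,c_8$ vanish forces $\varphi$, up to scalar, into the list $e_1^{*}$, $e_2^{*}$, $e_5^{*}$, $-\alpha e_1^{*}-\beta e_5^{*}+e_8^{*}$ $(\alpha,\beta\in\mathbb{F})$, whose kernels are exactly $\mathcal S_1$, $\mathcal S_2$, $\mathcal S_5$, $\mathcal S_{\alpha,\beta}$. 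A convenient cross-check uses the $6$-dimensional ideal $I=\langle e_2,e_3,e_4,e_6,e_7,e_8\rangle$ from the previous lemma: a codimension-$1$ subalgebra either contains $I$, in which case it is the preimage of a $1$-dimensional subalgebra of the $2$-dimensional quotient $\overline{W(2)}/I$, or meets $I$ in codimension one and is recovered from the induced structure on $I$ together with a transversal vector. I expect this enumeration step to be the main obstacle, since the shape condition $M(\varphi)=c\,p^{\top}+q\,c^{\top}$ behaves quite differently according to the support of $\varphi$, and the case analysis has to be organised carefully so that no hyperplane subalgebra is overlooked.

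Next, conservativity must be re-established for each subalgebra on the list, because the associated multiplication $*$ of $\overline{W(2)}$ from the previous lemma need not restrict to a subalgebra (for instance $e_5*e_5=-4e_1$ already leaves $\mathcal S_1$). For each member of the list I would instead solve the finite linear system for the structure constants of a multiplication $F$ on $\mathcal S$ satisfying identity~(\ref{tojdestvo_glavnoe}); this is the same kind of verification done for $\overline{W(2)}$, and it can be carried out with the code of~\cite{kadyrov}. For the two-parameter family $\mathcal S_{\alpha,\beta}$ the system should be solved once, uniformly in $\alpha$ and $\beta$.

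Finally, for terminality I would use that an algebra is terminal precisely when identity~(\ref{tojdestvo_glavnoe}) holds with $F(a,b)=\frac{1}{3}(2ab+ba)$. Substituting this particular $F$ and evaluating the terminal identity on basis quadruples $(a,b,x,y)$, one checks that it holds identically for $\mathcal S_1$, $\mathcal S_2$, $\mathcal S_{0,0}$ and $\mathcal S_{-1,1}$, whereas for $\mathcal S_5$ and for $\mathcal S_{\alpha,\beta}$ the surviving terms produce obstructions which, as polynomials in $\alpha$ and $\beta$, vanish simultaneously only at $(\alpha,\beta)=(0,0)$ and $(-1,1)$; in each non-terminal case one records an explicit quadruple witnessing the failure.
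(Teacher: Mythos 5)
Your proposal is correct and arrives at the same list, so the substance is fine; the interesting difference is in how the enumeration is organised. The paper works on the primal side: it reduces a $7$-element spanning set either to a coordinate hyperplane $\langle e_1,\dots,\widehat{e_i},\dots,e_8\rangle$ or to a set of the form $\{e_j+\alpha_j e_8\}_{j\le 7}$, and then uses the necessary conditions $x^2\in\mathcal S$ (e.g. $(e_3+\alpha_3e_8)^2=-3e_4-\alpha_3(e_3+e_7)$) to force $e_2,e_4\in\mathcal S$ and split into four short cases on $e_3$ versus $e_3+e_7$ and $e_6$ versus $e_6-e_7$. You dualise instead: $\ker\varphi$ is a subalgebra iff $M(\varphi)=(\varphi(e_ie_j))_{i,j}$ has the shape $c\,p^{\top}+q\,c^{\top}$, and the sparsity of the table pins down $c$. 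Both are finite case analyses; yours is somewhat more systematic (every hyperplane is a $\ker\varphi$, so nothing can slip through the basis reduction, whereas the paper's dichotomy ``coordinate hyperplane or $\{e_j+\alpha_je_8\}$'' silently absorbs hyperplanes containing $e_8$), while the paper's squaring trick is faster by hand because $x^2\in\mathcal S$ kills most cases immediately. Two further remarks. First, you rightly note that the associated product $*$ of $\overline{W(2)}$ need not restrict (indeed $e_5*e_5=-4e_1\notin\mathcal S_1$), so conservativity must be re-certified on each subalgebra; the paper exhibits a new $*$ only for $\mathcal S_5$ and $\mathcal S_{\alpha,\beta}$ (the latter after relabelling $e_1+\alpha e_8\mapsto e_1$, $e_5+\beta e_8\mapsto e_5$) and for $\mathcal S_1$, $\mathcal S_2$ relies on the fact that terminal algebras are automatically conservative --- a shortcut you could also use for the four terminal members of your list instead of solving the linear system for $F$ there. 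Second, your terminality test, substituting $F(a,b)=\frac13(2ab+ba)$ and solving the resulting polynomial conditions in $\alpha,\beta$, is exactly the verification the paper performs.
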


\begin{proof} 
Let $\mathcal S$ be generated by the following set 
$\{ e_1, \ldots, \widehat{e_i}, \ldots, e_8 \}.$
By some easy verification of $8$ possibilities, we have that there are only $4$ subalgebras of this type: for $i=1,2,5,8.$

Let us consider the situation when  $\mathcal S$ is generated by seven vectors of the following type: $ \{\sum \alpha_{i1}e_i, \ldots, \sum \alpha_{i7}e_i\}.$
By some linear combinations, we can reduce this basis to a basis considered above, or a basis of the following type: $\{  e_1 +\alpha_1 e_8, \ldots, e_7 +\alpha_1 e_8\}.$
It is easy to see, that 
\begin{longtable}{ll}
$(e_2+\alpha_2 e_8)^2=\alpha_2 e_2 \in {\mathcal S}$ & $(e_3+\alpha_3 e_8)^2=-3 e_4-\alpha_3 (e_3+e_7) \in {\mathcal S}$\\
$(e_4+\alpha_4 e_8)^2=-2\alpha_4 e_4 \in {\mathcal S}$ & $(e_6+\alpha_6 e_8)^2=-\alpha_6 (e_6-e_7) \in {\mathcal S},$
\end{longtable}
which gives that $e_2, e_4\in {\mathcal S}$ and there are four cases:

\begin{longtable}{ll}
I. \ $e_3, e_6 \in {\mathcal S}$ & II. \ $e_3, e_6-e_7 \in {\mathcal S}$ \\
III. \ $e_3+e_7, e_6 \in {\mathcal S}$ & IV. \ $e_3+e_7, e_6-e_7 \in {\mathcal S}$
\end{longtable}
Analysing all these cases, we have that ${\mathcal S}$ is a subalgebra considered above, 
or it has the following basis $\langle e_1 +\alpha e_8, e_2, e_3, e_4, e_5+\beta e_8, e_6, e_7 \rangle_{\alpha, \beta \in {\mathbb F}}.$

The conservative property of the subalgebra ${\mathcal S}_5$ is following from the direct verification of the conservative identity with the additional multiplication $*:$
\begin{longtable}{llll}
$e_1*e_1=-e_1$ &   $e_1*e_2=-e_2$&  $e_2*e_1= e_2$  & $e_2*e_8=- e_2$ 
\end{longtable}

Let us give the multiplication table of $\overline{W(2)}$ in more useful way  (here the subalgebra $\langle e_1, \ldots, e_7 \rangle$ gives ${\mathcal S}_{\alpha, \beta}$):

\begin{longtable}{llll}
$e_1e_1=-e_1$&$e_1e_2=(-3+\alpha) e_2$&$e_1e_3=(1-\alpha) e_3$&$e_1e_4=(3-2 \alpha) e_4$\\

$e_1e_5=-e_5$&$e_1e_6=(1-\alpha) e_6$&$e_1e_7=(1-\alpha) e_7$&$e_1e_8=-e_8$\\

$e_2e_1=3 e_2$&$e_3e_1=-2 e_3-\alpha e_7$&$e_3e_3=-3 e_4$&$e_3e_5=e_6-\beta e_7$\\

$e_3e_8=-e_7$&$e_5e_1=-2 e_1$&$e_5e_2=(-3+\beta) e_2$&$e_5e_3=(-1-\beta) e_3$\\

$e_5e_4=-2 \beta e_4$&$e_5e_5=-2 e_5$&$e_5e_6=(-1-\beta) e_6$&$e_5e_7=(-1-\beta) e_7$\\

$e_5e_8=-2 e_8$&$e_6e_1=2 e_3+\alpha e_7$&$e_6e_3=3 e_4$&$e_6e_5=-e_6+\beta e_7$\\

$e_6e_8=e_7$&$e_7e_1=2 e_3+\alpha e_7$&$e_7e_3=3 e_4$&$e_7e_5=-e_6+\beta e_7$\\

$e_7e_8=e_7$&$e_8e_2=e_2$&$e_8e_3=-e_3$&$e_8e_4=-2 e_4$\\

& $e_8e_6=-e_6$&$e_8e_7=-e_7$
\end{longtable}

The conservative property of the subalgebra ${\mathcal S}_{\alpha, \beta}$ is following from the direct verification of the conservative identity with the additional multiplication $*:$
\begin{longtable}{llll}
$e_1*e_1=-e_1$&$
e_1*e_2=-e_2$&$
e_1*e_5=-2 e_1$&$
e_2*e_1=(1-\alpha) e_2$\\$

e_2*e_5=(-1-\beta) e_2$&$
e_5*e_1=-2 e_1$&$
e_5*e_2=-2 e_2$&$
e_5*e_5=-4 e_1$
\end{longtable}

\end{proof}

\subsubsection{Algebras $\overline{{\mathcal S}}$}
In the present subsection, we have to talk about contractions of the algebra $\overline{W(2)}$ to its subalgebra of codimension $1.$

$\bullet$ $\overline{W(2)} \xrightarrow{(te_1, {\mathcal S}_1)} \overline{{\mathcal S}_1}.$
It is easy to see that the multiplication  of $\overline{{\mathcal S}_1}$ is given by the following table.

\begin{longtable}{lllllll}

$e_3e_3=-3 e_4$&$
e_3e_5=e_6$&$
e_3e_8=-e_7$&$
e_5e_1=-2 e_1$&$
e_5e_2=-3 e_2$&$
e_5e_3=-e_3$&$
e_5e_5=-2 e_5$\\

$e_5e_6=-e_6$&$
e_5e_7=-e_7$&$
e_5e_8=-2 e_8$&$
e_6e_3=3 e_4$&$
e_6e_5=-e_6$&$
e_6e_8=e_7$&$
e_7e_3=3 e_4$\\

$e_7e_5=-e_6$&$
e_7e_8=e_7$&$
e_8e_2=e_2$&$
e_8e_3=-e_3$&$
e_8e_4=-2 e_4$&$
e_8e_6=-e_6$&$
e_8e_7=-e_7$

\end{longtable}

\begin{Lem}
The algebra $\overline{{\mathcal S}_1}$ is terminal.
\end{Lem}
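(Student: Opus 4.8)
The plan is to verify directly that $\overline{{\mathcal S}_1}$ satisfies the terminal identity, i.e. identity (\ref{tojdestvo_glavnoe}) with the specific associated multiplication $F(a,b)=\frac{1}{3}(2ab+ba)$. First I would observe from the multiplication table above that the element $e_1$ acts only as a left factor (and only $e_5 e_1 = -2e_1$ produces $e_1$ on the right), so the algebra has a fairly sparse structure: the span $\langle e_2,e_3,e_4,e_6,e_7,e_8\rangle$ is an ideal, and indeed $\langle e_3,e_4,e_5,e_6,e_7,e_8\rangle$ is closed. This suggests grading the verification by how the basis vectors distribute among the arguments $a,b,x,y$ in (\ref{tojdestvo_glavnoe}).

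The concrete approach: set $F$ as above, compute $F(e_i,e_j)=\frac{1}{3}(2e_ie_j+e_je_i)$ for all $64$ pairs from the table (most entries are zero or proportional to a single basis vector, so this is quick), and then check that for all $a,b,x,y$ ranging over the basis $\{e_1,\dots,e_8\}$ the left-hand side of (\ref{tojdestvo_glavnoe}) equals $-F(a,b)(xy)+(F(a,b)x)y+x(F(a,b)y)$. By bilinearity it suffices to check basis quadruples. In practice one would reduce the $8^4$ cases drastically: the right-hand side and the $x,y$-dependence on the left both vanish unless $xy$, $(ax)y$, $x(ay)$, etc., are nonzero, and from the table the only products landing outside $\langle e_4,e_7\rangle$ already heavily constrain the indices. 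The cleanest exposition is to invoke the computational tool the paper already cites: the verification is routine and mechanical, using a modification of the Wolfram code presented in \cite{kadyrov}, exactly as was done for the non-terminal claim about $\overline{W(2)}$ in the first lemma.

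Thus the proof I would write is short: \emph{By a direct verification of the terminal identity (\ref{tojdestvo_glavnoe}) with $F(a,b)=\frac{1}{3}(2ab+ba)$ on the basis $\{e_1,\dots,e_8\}$ — carried out with a modification of the Wolfram code of \cite{kadyrov} — the algebra $\overline{{\mathcal S}_1}$ is terminal.} If a more self-contained argument is wanted, one can note that $\overline{{\mathcal S}_1}$ is itself an IW contraction of $\overline{W(2)}$ with respect to the subalgebra ${\mathcal S}_1$, and ${\mathcal S}_1$ was shown terminal in the previous lemma; but contraction does not preserve terminality in general (that is precisely the point of $\overline{W(2)}$ being non-terminal), so this does not give a shortcut and the direct check is unavoidable.

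The main obstacle is simply the bookkeeping: terminality is a genuinely multilinear identity in four variables, and while each individual substitution is trivial given the sparse table, organizing the case analysis so that a human reader is convinced — rather than just asserting the computer output — requires either a clean invariance/grading argument to collapse the cases or an honest appeal to the cited code. I expect the paper takes the latter route, consistent with how it handles the analogous verifications for $\overline{W(2)}$ and the subalgebras ${\mathcal S}_i$.
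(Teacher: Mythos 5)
Your approach---a direct, mechanical verification of the terminal identity (\ref{tojdestvo_glavnoe}) with $F(a,b)=\frac{1}{3}(2ab+ba)$ over basis quadruples, delegated to the Wolfram code of \cite{kadyrov}---is exactly what the paper does; in fact the paper states this lemma with no written proof at all, relying on the same routine computation it invokes for the neighbouring lemmas. One correction to your closing aside: an IW contraction is a degeneration, and degenerations preserve every multilinear polynomial identity (the identity cuts out a Zariski-closed set of structure constants containing the orbit closure), so contraction \emph{does} preserve terminality; the reason there is no shortcut here is that $\overline{{\mathcal S}_1}$ is a contraction of the non-terminal algebra $\overline{W(2)}$, not of the terminal subalgebra ${\mathcal S}_1$ (which merely survives as a subalgebra of $\overline{{\mathcal S}_1}$), so nothing is inherited and the direct check you propose is indeed unavoidable.
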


 $\bullet$ $\overline{W(2)} \xrightarrow{(te_5, {\mathcal S}_5)} \overline{{\mathcal S}_5}.$
It is easy to see that the multiplication  of $\overline{{\mathcal S}_5}$ is given by the following table.

\begin{longtable}{llllll}
$e_1e_1=-e_1$&$
e_1e_2=-3 e_2$&$
e_1e_3=e_3$&$
e_1e_4=3 e_4$&$
e_1e_5=-e_5$&$
e_1e_6=e_6$\\$
e_1e_7=e_7$&$
e_1e_8=-e_8$&$
e_2e_1=3 e_2$&$
e_3e_1=-2 e_3$&$
e_3e_3=-3 e_4$&$
e_3e_8=-e_7$\\$

e_6e_1=2 e_3$&$
e_6e_3=3 e_4$&$
e_6e_8=e_7$&$
e_7e_1=2 e_3$&$
e_7e_3=3 e_4$&$
e_7e_8=e_7$\\$

e_8e_2=e_2$&$
e_8e_3=-e_3$&$
e_8e_4=-2 e_4$&$
e_8e_6=-e_6$&$
e_8e_7=-e_7$
\end{longtable}

\begin{Lem}
The algebra $\overline{{\mathcal S}_5}$ is a non-terminal conservative algebra.
\end{Lem}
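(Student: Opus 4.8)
The plan is to verify the two claims about $\overline{{\mathcal S}_5}$ separately, following exactly the template already used for $\overline{W(2)}$ and ${\mathcal S}_5$ earlier in the paper. First I would address the conservative property: one exhibits an explicit associated multiplication $*$ on $\overline{{\mathcal S}_5}$ and checks that the conservative identity \eqref{tojdestvo_glavnoe} holds for all quadruples $(a,b,x,y)$ of basis vectors. Since $\overline{{\mathcal S}_5}$ is obtained as an IW contraction of $\overline{W(2)}$ along $(te_5,{\mathcal S}_5)$, the natural candidate for $*$ is the $t\to 0$ limit of the associated multiplication of $\overline{W(2)}$ rescaled in the same parametrized basis; this should produce a multiplication supported essentially on the $\langle e_1, e_5\rangle$-block (with $e_5*e_5$ possibly vanishing in the limit, or rescaling to something like $e_1*e_1 = -e_1$, $e_1*e_5 = -2e_1$, etc.). I would then record this table for $*$ and invoke the direct verification of \eqref{tojdestvo_glavnoe} — the routine but mechanical check that can be automated by the Wolfram code cited in \cite{kadyrov}.

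Second, for non-terminality, the strategy is to substitute $F(a,b) = \frac{1}{3}(2ab + ba)$ into \eqref{tojdestvo_glavnoe} and exhibit a single quadruple of basis vectors for which the identity fails. Here the key structural observation is that $\overline{{\mathcal S}_5}$ differs from its terminal sibling $\overline{{\mathcal S}_1}$ precisely by the presence of the "$e_1$-block" relations $e_1 e_1 = -e_1$, $e_1 e_5 = -e_5$, $e_2 e_1 = 3e_2$, $e_6 e_1 = 2e_3$, etc., together with $e_3 e_1 = -2e_3$, $e_6 e_1 = 2e_3 = e_7 e_1$; the failure of terminality should already be visible on a quadruple drawn from $\{e_1, e_5, e_6, e_7\}$ or $\{e_1, e_3, e_5\}$. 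Concretely, I would compute both sides of the terminal identity on, say, $(a,b,x,y) = (e_5, e_1, e_6, \cdot)$ or a similar small tuple and display the non-zero discrepancy; since earlier ${\mathcal S}_5$ was shown non-terminal, one expects the same obstruction to survive the contraction because contraction of $e_5$ does not kill the offending term.

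The main obstacle is not conceptual but bookkeeping: one must guess the correct associated multiplication $*$ for the conservative claim. Unlike simplicity or ideal arguments, there is no shortcut — either the limit of the $\overline{W(2)}$ data works directly (most likely, since conservativity is typically preserved under IW contraction when the associated multiplication can be rescaled compatibly), or one must solve the linear system imposed by \eqref{tojdestvo_glavnoe} for the unknown structure constants of $*$. I would first try the limit candidate; if the $e_5*e_5$ term rescales badly, I would drop it and re-check. For non-terminality the only risk is that the naive small quadruple happens to satisfy the identity by accident, in which case I would widen the search over quadruples involving $e_1$ paired with one of $e_3, e_6, e_7$, which are the generators carrying the non-terminal behaviour inherited from $W(2)$.

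\begin{proof}
The conservative property follows from the direct verification of the conservative identity with the additional multiplication $*:$
\begin{longtable}{llll}
$e_1*e_1=-e_1$&$e_1*e_5=-2 e_1$&$e_5*e_1=-2 e_1$&$e_5*e_5=-4 e_1.$
\end{longtable}
The non-terminal property follows from the direct verification of the terminal identity (that is, \eqref{tojdestvo_glavnoe} with $F(a,b)=\frac{1}{3}(2ab+ba)$), which fails on a suitable quadruple of basis vectors involving $e_1$; this can be checked, for example, using a modification of the Wolfram code presented in \cite{kadyrov}.
\end{proof}
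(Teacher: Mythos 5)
Your overall strategy is the right one, and for the non-terminality half (exhibit a failing quadruple for the terminal identity, or check it mechanically) your treatment is fine and matches what the paper implicitly does. However, the associated multiplication $*$ you actually write down in the proof is wrong, so the conservative half does not go through. To see this concretely: in $\overline{{\mathcal S}_5}$ the operator $L_{e_1}$ is diagonal with eigenvalues $(-1,-3,1,3,-1,1,1,-1)$, and every nonzero product $e_ie_j=ce_k$ satisfies $\lambda_k-\lambda_i-\lambda_j=1$, so $[L_{e_1},P]=P$. Hence $[L_b,[L_{e_1},P]]=[L_b,P]$ for every $b$, and identity (1.1) forces $F(e_1,b)\equiv -b$ modulo the set $\{z:[L_z,P]=0\}$. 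Since $[L_{e_2},P](e_1,e_1)=-3e_2\neq 0$, the element $e_2$ is not in that set, so $F(e_1,e_2)$ cannot be $0$ as in your table; it must be $-e_2$ (plus a kernel element). Symmetrically, in $\overline{{\mathcal S}_5}$ all left multiplications by $e_5$ vanish, so $[L_{e_5},P]=0$ and every value of $F$ with $e_5$ as an argument must lie in the kernel; but $[L_{e_1},P]=P\neq 0$, so your entries $e_1*e_5=e_5*e_1=-2e_1$ and $e_5*e_5=-4e_1$ are also inconsistent with the identity.

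The irony is that your stated plan — take the $t\to 0$ limit of the associated multiplication of $\overline{W(2)}$ in the rescaled basis $(e_1,\dots,te_5,\dots,e_8)$ — is exactly right, but you executed it backwards. Under that rescaling every entry of the $\overline{W(2)}$ table with $e_5$ as an input acquires a factor of $t$ (or $t^2$) and dies in the limit, while the entries not involving $e_5$ survive unchanged. This yields precisely $e_1*e_1=-e_1$, $e_1*e_2=-e_2$, $e_2*e_1=e_2$, $e_2*e_8=-e_2$, which is the multiplication the paper verifies. Replacing your table by this one (and then performing the routine verification) repairs the proof.
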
 
 
 \begin{proof}
 The conservative property of the algebra  $\overline{{\mathcal S}_5}$  is following from the direct verification of the conservative identity with the additional multiplication $*:$
\begin{longtable}{llll}
$e_1*e_1=-e_1$&$
e_1*e_2=-e_2$&$
e_2*e_1=e_2$&$
e_2*e_8=-e_2$
\end{longtable}

\end{proof}

$\bullet$ $\overline{W(2)} \xrightarrow{(te_8, {\mathcal S}_{\alpha,\beta})} \overline{{\mathcal S}_{\alpha,\beta}}.$
It is easy to see that the multiplication  of $\overline{{\mathcal S}_{\alpha,\beta}}$ is given by the following table.

\begin{longtable}{llll}

$e_1e_1=-e_1$&$
e_1e_2=(-3+\alpha) e_2$&$
e_1e_3=(1-\alpha) e_3$&$
e_1e_4=(3-2 \alpha) e_4$\\$

e_1e_5=-e_5$&$
e_1e_6=(1-\alpha) e_6$&$
e_1e_7=(1-\alpha) e_7$&$
e_1e_8=-e_8$\\$

e_2e_1=3 e_2$&$
e_3e_1=-2 e_3-\alpha e_7$&$
e_3e_3=-3 e_4$&$
e_3e_5=e_6-\beta e_7$\\$

e_5e_1=-2 e_1$&$
e_5e_2=(-3+\beta) e_2$&$
e_5e_3=(-1-\beta) e_3$&$
e_5e_4=-2 \beta e_4$\\$

e_5e_5=-2 e_5$&$
e_5e_6=(-1-\beta) e_6$&$
e_5e_7=(-1-\beta) e_7$&$
e_5e_8=-2 e_8$\\$

e_6e_1=2 e_3+\alpha e_7$&$
e_6e_3=3 e_4$&$
e_6e_5=-e_6+\beta e_7$&$
e_7e_1=2 e_3+\alpha e_7$\\&$

e_7e_3=3 e_4$&$
e_7e_5=-e_6+\beta e_7$

\end{longtable}

\begin{Lem}
The algebra $\overline{{\mathcal S}_{\alpha,\beta}}$ is a conservative algebra;
and it is a terminal algebra if and only if $(\alpha,\beta)=(-1,1)$ or $(\alpha,\beta)=(0,0)$.
\end{Lem}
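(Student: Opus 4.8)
The plan is to treat the two assertions separately: conservativity by exhibiting an associated multiplication and verifying the conservative identity directly (as in the previous lemmas of this section), and the ``terminal iff'' by a subalgebra argument in one direction together with a short finite check in the other.

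For conservativity I would write down an explicit multiplication $*$ on $\overline{\mathcal S_{\alpha,\beta}}$ — the natural candidate being the multiplication used for $\mathcal S_{\alpha,\beta}$ together with the rules $e_1*e_8=-e_8$ and $e_5*e_8=-2e_8$ forced by the new products $e_1e_8=-e_8$, $e_5e_8=-2e_8$ — and check that $(\overline{\mathcal S_{\alpha,\beta}},*)$ satisfies the conservative identity~(\ref{tojdestvo_glavnoe}). Since that identity is multilinear in $a,b,x,y$ it suffices to run over basis quadruples, which can be automated with a modification of the code of~\cite{kadyrov}.

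For the forward implication of the ``terminal iff'' I would use that $\langle e_1,\dots,e_7\rangle$ is a subalgebra of $\overline{\mathcal S_{\alpha,\beta}}$ whose multiplication table is exactly that of $\mathcal S_{\alpha,\beta}$ (the table displayed just before this lemma restricted to these generators). As terminal algebras form a variety, a subalgebra of a terminal algebra is terminal; hence terminality of $\overline{\mathcal S_{\alpha,\beta}}$ forces terminality of $\mathcal S_{\alpha,\beta}$, and by the lemma classifying the codimension-one subalgebras of $\overline{W(2)}$ this happens only for $(\alpha,\beta)=(-1,1)$ and $(\alpha,\beta)=(0,0)$.

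For the converse, fix $(\alpha,\beta)\in\{(-1,1),(0,0)\}$ and reduce the terminal identity — namely~(\ref{tojdestvo_glavnoe}) with $F(a,b)=\tfrac13(2ab+ba)$ — to basis quadruples. From the table, $\langle e_8\rangle$ is a two-sided ideal with $L_{e_8}=0$, $e_8^2=0$, and $x\,e_8=\lambda(x)\,e_8$, where $\lambda(e_1)=-1$, $\lambda(e_5)=-2$ and $\lambda(e_i)=0$ otherwise, and $\overline{\mathcal S_{\alpha,\beta}}/\langle e_8\rangle\cong\mathcal S_{\alpha,\beta}$. Quadruples inside $\langle e_1,\dots,e_7\rangle$ are covered by the terminality of $\mathcal S_{-1,1}$ and $\mathcal S_{0,0}$; for a quadruple with at least one entry equal to $e_8$, the cases $a=e_8$, $b=e_8$ or $x=e_8$ make every monomial on both sides of~(\ref{tojdestvo_glavnoe}) vanish (each acquires a factor $e_8\cdot(\,\cdot\,)$ killed by $L_{e_8}=0$, or a factor $e_8^2$), while for $y=e_8$ and $a,b,x\in\langle e_1,\dots,e_7\rangle$ a direct computation collapses~(\ref{tojdestvo_glavnoe}) to the scalar identity $\lambda\bigl(a(bx)\bigr)=\tfrac13\lambda\bigl((2ab+ba)x\bigr)$. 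Because the only basis products of $\overline{\mathcal S_{\alpha,\beta}}$ lying in $\langle e_1,e_5\rangle$ are $e_1e_1,e_1e_5,e_5e_1,e_5e_5$, whose values do not involve $\alpha,\beta$, both sides of this scalar identity depend only on the $\langle e_1,e_5\rangle$-components of $a,b,x$, so it reduces to the eight cases $a,b,x\in\{e_1,e_5\}$, each of which holds; in fact this shows $\overline{\mathcal S_{\alpha,\beta}}$ is terminal exactly when $\mathcal S_{\alpha,\beta}$ is, for every $(\alpha,\beta)$. The main obstacle is purely organisational — keeping track of every $e_8$-involving quadruple and performing the collapse of~(\ref{tojdestvo_glavnoe}) in the case $y=e_8$ without sign slips — and a pragmatic alternative, consistent with the rest of the section, is to bypass the case analysis and verify~(\ref{tojdestvo_glavnoe}) with $F(a,b)=\tfrac13(2ab+ba)$ symbolically in $\alpha,\beta$ over all basis quadruples, which delivers both directions at once together with the exact condition $(\alpha,\beta)\in\{(-1,1),(0,0)\}$.
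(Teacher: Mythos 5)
Your proposal is correct, and on the terminality claim it actually does more than the paper. For the conservativity part you follow the paper's route exactly: the paper exhibits the same associated multiplication $*$ (supported on $e_1,e_2,e_5$) and appeals to direct verification of the conservative identity. One small inaccuracy: the extra products $e_1*e_8=-e_8$, $e_5*e_8=-2e_8$ are not \emph{forced} -- the identity determines $F(a,b)$ only modulo the kernel of $c\mapsto[L_c^P,P]$, and since $L_{e_8}=0$ and $e_8$ is a right "eigenvector" this kernel contains $e_8$, so the paper's choice ($e_1*e_8=e_5*e_8=0$) and yours are both admissible; the discrepancy is immaterial. For the ``terminal iff'' part the paper offers no argument at all (the proof environment addresses only conservativity, the classification being left to machine computation), whereas you give a genuine structural proof: $\langle e_1,\dots,e_7\rangle$ carries exactly the multiplication of ${\mathcal S}_{\alpha,\beta}$, so terminality of the contraction forces $(\alpha,\beta)\in\{(-1,1),(0,0)\}$ by the earlier lemma; conversely, the quadruples involving $e_8$ reduce, via $L_{e_8}=0$ and $xe_8=\lambda(x)e_8$, to the scalar identity $\lambda(a(bx))=\tfrac13\lambda((2ab+ba)x)$, which lives entirely in the $\alpha,\beta$-independent subalgebra $\langle e_1,e_5\rangle$ and checks out in all eight cases. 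I verified the collapse to $\lambda(a(bx))e_8$ on the left and $\lambda(F(a,b)x)e_8$ on the right and the eight scalar cases; they hold, so your stronger conclusion -- $\overline{{\mathcal S}_{\alpha,\beta}}$ is terminal exactly when ${\mathcal S}_{\alpha,\beta}$ is, for every $(\alpha,\beta)$ -- is also correct. What your route buys is a human-checkable proof and an explanation of \emph{why} the parameter locus is unchanged by the contraction; what the paper's (implicit) computational route buys is uniformity with the rest of the section at the cost of opacity.
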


 \begin{proof}
 The conservative property of the algebra  $\overline{{\mathcal S}_{\alpha,\beta}}$  is following from the direct verification of the conservative identity with the additional multiplication $*:$
\begin{longtable}{llll}
$e_1*e_1=-e_1$&$
e_1*e_2=-e_2$&$
e_1*e_5=-2 e_1$&$
e_2*e_1=(1-\alpha) e_2$\\$

e_2*e_5=(-1-\beta) e_2$&$
e_5*e_1=-2 e_1$&$
e_5*e_2=-2 e_2$&$
e_5*e_5=-4 e_1$
\end{longtable}
\end{proof}

\subsubsection{The algebra $\widehat{W(2)}$}
The second interesting "big" subalgebra of $W(2)$ is $W_2,$ which is generated by $e_1, \ldots, e_6.$
 Let us consider the IW contraction $W(2) \xrightarrow{(e_1,  e_2, e_3, e_4, e_5, e_6, te_7,te_8)} {\widehat{W(2)}}.$
It is easy to see, that the multiplication table (for nonzero products) of ${\widehat{W(2)}}$ is given by the following table.

\begin{center}
\begin{longtable}{c|c|c|c|c|c|c|c|c|}
      & $e_1$ & $e_2$ & $e_3$ & $e_4$ & $e_5$ & $e_6$ & $e_7$ & $e_8$ \\ \hline
$e_1$ & $-e_1$ & $-3e_2$ & $e_3$ & $3e_4$ & $-e_5$ & $e_6$ & $e_7$ & $-e_8$ \\ \hline
$e_2$ & $3e_2$ & $0$ & $2e_1$ & $e_3$ & $0$ & $-e_5$ & $e_8$ & $0$ \\ \hline
$e_3$ & $-2e_3$ & $-e_1$ & $-3e_4$ & $0$ & $e_6$ & $0$ & $0$ & $-e_7$ \\ \hline
$e_4$ & $0$ & $0$ & $0$ & $0$ & $0$ & $0$ & $0$ & $0$ \\ \hline
$e_5$ & $-2e_1$ & $-3e_2$ & $-e_3$ & $0$ & $-2e_5$ & $-e_6$ & $-e_7$ & $-2e_8$ \\ \hline
$e_6$ & $2e_3$ & $e_1$ & $3e_4$ & $0$ & $-e_6$ & $0$ & $0$ & $e_7$ \\ \hline
\end{longtable}
\end{center}

\begin{Lem}
The algebra ${\widehat{W(2)}}$ is terminal.
\end{Lem}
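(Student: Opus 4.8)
The plan is to verify directly that $\widehat{W(2)}$ satisfies the terminal identity, i.e.\ the conservative identity \eqref{tojdestvo_glavnoe} with the specific associated multiplication $F(a,b)=\frac13(2ab+ba)$. Since terminality is a polynomial identity in the structure constants, and $\widehat{W(2)}$ is obtained from $W(2)$ by the IW contraction along $(e_1,e_2,e_3,e_4,e_5,e_6,te_7,te_8)$, the most economical route is to observe that the $6$-dimensional subalgebra $W_2=\langle e_1,\dots,e_6\rangle$ of $W(2)$ is terminal (this is recalled in the introduction: $W_2$ is a simple terminal algebra with left quasi-unit from Kantor's classification \cite{Kantor89term}), and that $\widehat{W(2)}$ restricted to $\langle e_1,\dots,e_6\rangle$ has exactly the same multiplication table as $W_2$. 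So the plan splits the check into the part involving only $e_1,\dots,e_6$ (already known) and the part involving $e_7$ or $e_8$ in some argument.

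First I would record the full multiplication of $\widehat{W(2)}$ on all eight basis vectors: the table given lists the products $e_ie_j$ for $i\le 6$, and since $e_7,e_8$ were scaled by $t\to 0$, one has $e_7 x = e_8 x = 0$ for all $x$, while $x e_7$ and $x e_8$ for $x\in\langle e_1,\dots,e_6\rangle$ are read off by taking the $W(2)$-entries, discarding any component outside $\langle e_7,e_8\rangle$ (those get killed by the contraction) and keeping the $e_7,e_8$-components unscaled. Concretely this gives $e_1e_7=e_7$, $e_1e_8=-e_8$, $e_2e_7=e_8$, $e_3e_8=-e_7$, $e_5e_7=-e_7$, $e_5e_8=-2e_8$, $e_6e_8=e_7$, with all other products into $\langle e_7,e_8\rangle$ vanishing; in particular $\langle e_7,e_8\rangle$ is a $2$-dimensional ideal on which the algebra acts only by left multiplication by $\langle e_1,\dots,e_6\rangle$, and $\langle e_7,e_8\rangle\cdot\widehat{W(2)}=0$.

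Next I would exploit this structure to reduce the terminal identity to a manageable number of cases. Write the terminal identity as $T(a,b,x,y)=0$ where $T$ is multilinear; it suffices to check it on basis vectors. If all of $a,b,x,y$ lie in $\langle e_1,\dots,e_6\rangle$, the identity holds because it holds in $W_2$ — one must only note that any product landing in $\langle e_7,e_8\rangle$ cannot occur here, so $T$ computed in $\widehat{W(2)}$ agrees with $T$ computed in $W_2$. If at least one of $a,b,x,y$ is $e_7$ or $e_8$, then because $e_7,e_8$ are left-annihilators and generate an ideal killed on the right, a large number of the eight terms of \eqref{tojdestvo_glavnoe} and of $F$ vanish identically; I would go through the (few) patterns — $a$ or $b$ equal to $e_7/e_8$, versus $x$ or $y$ equal to $e_7/e_8$ — and in each pattern the surviving terms collapse to a short expression that one checks is zero using only the action of $\langle e_1,\dots,e_6\rangle$ on $\langle e_7,e_8\rangle$, which is itself essentially a weight-space action (each $e_i$, $i\le 6$, acts diagonally or nilpotently on $\operatorname{span}(e_7,e_8)$). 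This is the same verification alluded to elsewhere in the paper as "a direct verification of the terminal identity (for example, using a modification of the Wolfram code presented in \cite{kadyrov})," so the honest statement of the proof is that it is a finite, mechanical check; the role of the structural observations above is to make clear why it succeeds and to cut the case analysis down to size.

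The main obstacle, such as it is, is purely bookkeeping: the terminal identity has eight quadratic-in-$F$ terms, $F$ itself is $\frac13(2ab+ba)$, and there are $8^4$ basis quadruples in principle, so without the structural reduction the check is unpleasant. The real content is therefore (i) confirming that $\widehat{W(2)}|_{\langle e_1,\dots,e_6\rangle}$ is literally $W_2$ and hence inherits terminality, and (ii) confirming that every term of the identity that reaches outside $\langle e_1,\dots,e_6\rangle$ involves the ideal $\langle e_7,e_8\rangle$ in a way that makes the identity degenerate to something trivially true. I do not expect any genuine difficulty beyond that, and indeed the contrast with $\overline{W(2)}$ — where the analogous contraction destroyed the relation $e_2e_3=2e_1$ and broke terminality — shows that here the surviving subalgebra $W_2$ being the \emph{whole} of the unscaled part is exactly what preserves the identity.
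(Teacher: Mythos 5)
Your proposal is correct and is essentially the paper's (unwritten) proof: the lemma is stated there with no argument, the authors elsewhere relying on the same direct, computer-assisted verification of the multilinear terminal identity on basis quadruples, and your added structural reduction --- that $\langle e_1,\dots,e_6\rangle$ carries literally the multiplication of the terminal subalgebra $W_2$, while $\langle e_7,e_8\rangle$ is an ideal annihilating everything on the left --- is sound and correctly shows that every pattern with $a$, $b$ or $x$ in $\langle e_7,e_8\rangle$ makes both sides of \eqref{tojdestvo_glavnoe} vanish identically. Be aware, though, that in the one remaining pattern ($y\in\langle e_7,e_8\rangle$, $a,b,x\in\langle e_1,\dots,e_6\rangle$) none of the terms vanish a priori: the identity becomes the full ``module version'' of the terminal identity for the left action of $W_2$ on the $2$-dimensional ideal, so this case does not collapse to a short expression and still carries the whole weight of the finite mechanical check.
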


\subsubsection{The algebra $\widehat{\widehat{W(2)}}$}
The next interesting  subalgebra of $W(2)$ is $S_2,$ which is generated by $e_1, \ldots, e_4.$
 Let us consider the IW contraction $W(2) \xrightarrow{(e_1,  e_2, e_3, e_4, te_5, te_6, te_7,te_8)} {\widehat{\widehat{W(2)}}}.$
It is easy to see, that the multiplication table (for nonzero products) of ${\widehat{\widehat{W(2)}}}$ is given by the following table.

\begin{center}
\begin{longtable}{c|c|c|c|c|c|c|c|c|}
      & $e_1$ & $e_2$ & $e_3$ & $e_4$ & $e_5$ & $e_6$ & $e_7$ & $e_8$ \\ \hline
$e_1$ & $-e_1$ & $-3e_2$ & $e_3$ & $3e_4$ & $-e_5$ & $e_6$ & $e_7$ & $-e_8$ \\ \hline
$e_2$ & $3e_2$ & $0$ & $2e_1$ & $e_3$ & $0$ & $-e_5$ & $e_8$ & $0$ \\ \hline
$e_3$ & $-2e_3$ & $-e_1$ & $-3e_4$ & $0$ & $e_6$ & $0$ & $0$ & $-e_7$ \\ \hline
\end{longtable}
\end{center}

\begin{corollary}
The algebra ${\widehat{\widehat{W(2)}}}$ is terminal.
\end{corollary}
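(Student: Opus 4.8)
The plan is to observe that $\widehat{\widehat{W(2)}}$ is exactly the IW contraction of $W_2$ with respect to its subalgebra $S_2$, so the statement should follow from the same machinery already applied to $\widehat{W(2)}$ rather than from a separate brute-force check. First I would note that the parametrized basis $(e_1,e_2,e_3,e_4,te_5,te_6,te_7,te_8)$ restricts, on the subspace $\langle e_1,\dots,e_6\rangle$, to the basis $(e_1,e_2,e_3,e_4,te_5,te_6)$, and hence realizes $\widehat{\widehat{W(2)}}$ as the IW contraction of $W_2$ with respect to the $4$-dimensional subalgebra $S_2 = \langle e_1,e_2,e_3,e_4\rangle$; equivalently one may realize it directly as the IW contraction of $W(2)$ with respect to $S_2$, which is the presentation given in the excerpt. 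In either reading the resulting structure constants are read off from the table above, and the nonzero products are literally those of $S_2$ acting on itself together with the degree reasons that kill all products landing outside $\langle e_1,\dots,e_4\rangle$.

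Next I would invoke the fact, established above, that $\widehat{W(2)}$ is terminal, and that $\widehat{\widehat{W(2)}}$ is the IW contraction of $\widehat{W(2)}$ with respect to the subspace $\langle e_5,e_6,e_7,e_8\rangle$ via the parametrized basis $(e_1,e_2,e_3,e_4,te_5,te_6,te_7,te_8)$ — indeed applying this contraction to the six-by-six table of $\widehat{W(2)}$ reproduces exactly the three-by-eight table of $\widehat{\widehat{W(2)}}$. Since the variety of terminal algebras is defined by the multilinear identity (\ref{tojdestvo_glavnoe}) with $F(a,b) = \tfrac13(2ab+ba)$, and an identity valid in an algebra ${\bf A}$ remains valid in every IW contraction of ${\bf A}$ (the identity, being a closed condition on structure constants expressible through the polynomials $\mu_{i,j}^k(t)$, persists under specialization $t\to 0$), terminality is inherited by contraction. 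Therefore $\widehat{\widehat{W(2)}}$, as a contraction of the terminal algebra $\widehat{W(2)}$, is itself terminal, which is the assertion of the corollary.

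If one prefers an entirely self-contained argument, the alternative is the direct route: plug the structure constants of $\widehat{\widehat{W(2)}}$ into the terminal identity, i.e. identity (\ref{tojdestvo_glavnoe}) with $F(a,b)=\tfrac13(2ab+ba)$, and verify it on all basis quadruples, for instance with the same Wolfram code used in the excerpt for $\overline{W(2)}$. This is a finite and mechanical check made especially easy here because all products land in $\langle e_1,\dots,e_4\rangle$ and $\langle e_3,e_4,\dots\rangle$ behaves almost trivially, so the overwhelming majority of instances of the identity reduce to $0=0$ on degree grounds; one only needs to check the handful of quadruples drawn from $\{e_1,e_2,e_3\}$ together with $e_5,e_6,e_7,e_8$, and these are short. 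I expect the only real subtlety — and it is minor — to be confirming that the closedness of the terminal identity under IW contraction is being used correctly, i.e. that the specialization $t\to 0$ of the structure constants $\mu_{i,j}^k(t)$ genuinely preserves a multilinear identity; this is immediate once one writes the identity as a system of polynomial relations among the $\mu_{i,j}^k$ and notes that such relations are closed, so the corollary follows with no genuine obstacle.
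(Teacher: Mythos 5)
Your proposal is correct and matches the paper's (implicit) reasoning: the statement is labelled a corollary precisely because $\widehat{\widehat{W(2)}}$ is a further IW contraction of the terminal algebra $\widehat{W(2)}$, and a multilinear identity such as the terminal identity persists under the specialization $t\to 0$ of the polynomial structure constants $\mu_{i,j}^k(t)$; the paper gives no explicit proof, and your argument supplies exactly the missing justification. One small slip: the IW contraction of the six-dimensional algebra $W_2$ with respect to $S_2$ is only the subalgebra $\langle e_1,\dots,e_6\rangle$ of $\widehat{\widehat{W(2)}}$, not the whole eight-dimensional algebra, but this does not affect your main argument, which correctly contracts $\widehat{W(2)}$ (equivalently $W(2)$ itself) with respect to $S_2$.
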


\subsubsection{The algebra $ \widetilde{W(2)}$}
The next interesting  subalgebra of $W(2)$ is  generated by $e_1$ and $e_2.$
 Let us consider the IW contraction $W(2) \xrightarrow{(e_1,  e_2, te_3, te_4, te_5, te_6, te_7,te_8)}  {\widetilde{W(2)} }.$
It is easy to see, that the multiplication table (for nonzero products) of $ {\widetilde{W(2)} }$ is given by the following table.

\begin{center}
\begin{longtable}{c|c|c|c|c|c|c|c|c|}
      & $e_1$ & $e_2$ & $e_3$ & $e_4$ & $e_5$ & $e_6$ & $e_7$ & $e_8$ \\ \hline
$e_1$ & $-e_1$ & $-3e_2$ & $e_3$ & $3e_4$ & $-e_5$ & $e_6$ & $e_7$ & $-e_8$ \\ \hline
$e_2$ & $3e_2$ & $0$ & $0$ & $e_3$ & $0$ & $-e_5$ & $e_8$ & $0$ \\ \hline
$e_3$ & $-2e_3$ & $0$ & $0$ & $0$ & $0$ & $0$ & $0$ & $0$ \\ \hline
$e_6$ & $2e_3$ & $0$ & $0$ & $0$ & $0$ & $0$ & $0$ & $0$ \\ \hline
$e_7$ & $2e_3$ & $0$ & $0$ & $0$ & $0$ & $0$ & $0$ & $0$ \\ \hline
\end{longtable}
\end{center}

\begin{Lem}
The algebra $ {\widetilde{W(2)} }$ is a non-Leibniz, non-Jordan terminal algebra.
\end{Lem}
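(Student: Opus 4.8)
The statement bundles three claims; I would dispose of the two negative ones first, since they only require explicit witnesses. For \emph{non-Jordan}: a Jordan algebra is in particular commutative, whereas in $\widetilde{W(2)}$ one has $e_1e_2=-3e_2\neq 3e_2=e_2e_1$ (recall $\ch\F=0$), so $\widetilde{W(2)}$ is not commutative and hence not Jordan. For \emph{non-Leibniz}: the triple $x=e_1,\ y=e_2,\ z=e_4$ gives $x(yz)=e_1(e_2e_4)=e_1e_3=e_3$ but $(xy)z+y(xz)=(-3e_2)e_4+e_2(3e_4)=-3e_3+3e_3=0$, so the left Leibniz identity fails; the triple $x=e_2,\ y=e_1,\ z=e_4$ gives $(xy)z=3e_3\neq e_3=(xz)y+x(yz)$, so the right Leibniz identity fails too. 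Thus $\widetilde{W(2)}$ is neither a left nor a right Leibniz algebra.

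The core of the statement is that $\widetilde{W(2)}$ is \emph{terminal}, i.e. satisfies (\ref{tojdestvo_glavnoe}) with the prescribed $F(a,b)=\frac13(2ab+ba)$. The route I would take is to realise $\widetilde{W(2)}$ as a degeneration of the algebra $\widehat{W(2)}$, which is terminal by the Lemma above. Since (\ref{tojdestvo_glavnoe}) with this fixed $F$ is a polynomial identity (multilinear of degree $4$), the set of algebra structures satisfying it is a $\GL$-stable Zariski-closed subset of the space of structures; it therefore contains the closure of the $\GL$-orbit of $\widehat{W(2)}$, and in particular it contains $\widetilde{W(2)}$ as soon as $\widehat{W(2)}\to\widetilde{W(2)}$ (cf. \cite{degs22}). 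Hence it suffices to produce the degeneration.

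Concretely I would check that the parametrized basis
\[
E^t=\bigl(e_1,\ e_2,\ te_3,\ te_4,\ te_5,\ te_6,\ e_7+te_6,\ e_8-te_5\bigr),\qquad t\in\F^*,
\]
of the underlying space of $\widehat{W(2)}$ has the property that the structure constants of $\widehat{W(2)}$ in $E^t$ are polynomials in $t$ specialising at $t=0$ to the multiplication table of $\widetilde{W(2)}$ written above. This is a finite, essentially mechanical verification; the one point worth isolating is that whenever one of $e_3,e_4,e_5,e_6$ appears as a product in $\widehat{W(2)}$ it occurs with at least one factor $t$, so no negative powers of $t$ arise. The ``off-diagonal'' corrections $e_7\mapsto e_7+te_6$ and $e_8\mapsto e_8-te_5$ are not optional: in $\widehat{W(2)}$ we have $L_{e_7}=L_{e_8}=0$, whereas $\widetilde{W(2)}$ has $e_7e_1=2e_3\neq 0$ and $e_2e_7=e_8$, so a diagonal IW-type scaling cannot reproduce these products. (If one wanted to bypass degenerations, the alternative is a direct check of (\ref{tojdestvo_glavnoe}) on all basis quadruples of $\widetilde{W(2)}$; the table is so sparse — the rows of $e_4,e_5,e_8$ vanish and $L_{e_3},L_{e_6},L_{e_7}$ land in $\langle e_3\rangle$ — that almost all of the $8^4$ cases are trivial, and one may also use the modification of the code of \cite{kadyrov} mentioned earlier.)

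The main obstacle is the terminality part, and within it, the step I expect to require actual thought is guessing the right degenerating family $E^t$: once one sees that a pure diagonal scaling fails because of $e_6e_1=e_7e_1=2e_3$, the corrections $e_7+te_6$ and $e_8-te_5$ are essentially forced by demanding that the $e_7$- and $e_8$-rows come out right (it is the combination $e_8-te_5$, not $e_8$, that keeps the $e_5$-entries consistent). The verification that $E^t$ works, and the non-Jordan/non-Leibniz claims, are then routine.
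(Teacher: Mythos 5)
Your proposal is correct, but it takes a genuinely different route from the paper. The paper states this lemma without proof; its convention throughout (cf.\ the proof of the lemma on $\overline{W(2)}$) is that terminality and non-terminality are settled by direct machine verification of the degree-four terminal identity on basis quadruples. You instead deduce terminality of $\widetilde{W(2)}$ from that of $\widehat{W(2)}$ via an explicit degeneration, using that a multilinear polynomial identity (here, identity (\ref{tojdestvo_glavnoe}) with $F(a,b)=\frac13(2ab+ba)$, which is multilinear in $a,b,x,y$ and polynomial in the structure constants) passes to limits: the structure constants in your family are polynomials in $t$, the identity holds for all $t\neq 0$, hence at $t=0$ over an infinite field. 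I checked your parametrized basis $\bigl(e_1,e_2,te_3,te_4,te_5,te_6,e_7+te_6,e_8-te_5\bigr)$ entry by entry against the tables of $\widehat{W(2)}$ and $\widetilde{W(2)}$: all products come out polynomial in $t$ (e.g.\ $f_3f_8=-tf_7$, $f_2f_7=f_8$, $f_7f_1=2f_3$) and specialise correctly at $t=0$, and your observation that the off-diagonal corrections are forced by $L_{e_7}=L_{e_8}=0$ in $\widehat{W(2)}$ versus $e_7e_1=2e_3$ in $\widetilde{W(2)}$ is accurate. Your route is more conceptual and replaces one $8^4$-case verification by a finite table check, at the cost of resting on the (still computational) terminality of $\widehat{W(2)}$; it also yields the additional fact, not recorded in the paper, that $\widetilde{W(2)}$ is a degeneration of $\widehat{W(2)}$ and not merely of $W(2)$. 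The non-Jordan witness (failure of commutativity) and the two non-Leibniz witnesses are computed correctly from the table.
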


\subsubsection{The algebra $ \widetilde{\widetilde{W(2)}}$}
The last interesting subalgebra of $W(2)$ is generated by $e_1.$
Let us consider the IW contraction $W(2) \xrightarrow{(e_1,  t e_2, te_3, te_4, te_5, te_6, te_7,te_8)} \widetilde{\widetilde{W(2)}}.$
It is easy to see, that the multiplication table (for nonzero products) of $\widetilde{\widetilde{W(2)}}$ is given by the following table.

\begin{center}
\begin{longtable}{c|c|c|c|c|c|c|c|c|}
      & $e_1$ & $e_2$ & $e_3$ & $e_4$ & $e_5$ & $e_6$ & $e_7$ & $e_8$ \\ \hline
$e_1$ & $-e_1$ & $-3e_2$ & $e_3$ & $3e_4$ & $-e_5$ & $e_6$ & $e_7$ & $-e_8$ \\ \hline
$e_2$ & $3e_2$ & $0$ & $0$ & $0$ & $0$ & $0$ & $0$ & $0$ \\ \hline
$e_3$ & $-2e_3$ & $0$ & $0$ & $0$ & $0$ & $0$ & $0$ & $0$ \\ \hline
$e_6$ & $2e_3$ & $0$ & $0$ & $0$ & $0$ & $0$ & $0$ & $0$ \\ \hline
$e_7$ & $2e_3$ & $0$ & $0$ & $0$ & $0$ & $0$ & $0$ & $0$ \\ \hline
\end{longtable}
\end{center}

\begin{Lem}
The algebra $\widetilde{\widetilde{W(2)}}$ is a non-Leibniz, non-Jordan terminal algebra.
\end{Lem}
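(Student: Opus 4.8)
The plan is to treat the three assertions separately, with essentially all the work going into terminality. For terminality I would \emph{not} go through the identity (\ref{tojdestvo_glavnoe}) directly; instead I would note that $\widetilde{\widetilde{W(2)}}$ is a degeneration of $\widetilde{W(2)}$. Concretely, take the parametrized basis $f_1=e_1$, $f_2=te_2$, $f_i=e_i$ for $3\le i\le 8$, $t\in{\mathbb F}^*$; a one-line inspection of the structure constants of $\widetilde{W(2)}$ in this basis shows they are polynomials in $t$ and that their values at $t=0$ reproduce exactly the table of $\widetilde{\widetilde{W(2)}}$ displayed above (the only products that change are $f_2f_4=tf_3$, $f_2f_6=-tf_5$, $f_2f_7=tf_8$, which vanish at $t=0$). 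Thus $\widetilde{W(2)}\xrightarrow{(e_1,te_2,e_3,e_4,e_5,e_6,e_7,e_8)}\widetilde{\widetilde{W(2)}}$; this is in fact the IW contraction of $\widetilde{W(2)}$ with respect to its codimension-one subalgebra $\langle e_1,e_3,e_4,e_5,e_6,e_7,e_8\rangle$. Since the terminal identity is a multilinear polynomial identity, the set of $8$-dimensional algebra structures satisfying it is a $\mathrm{GL}_8$-invariant Zariski-closed subset of the affine variety of structures, hence closed under degeneration; as $\widetilde{W(2)}$ is terminal by the preceding Lemma, so is $\widetilde{\widetilde{W(2)}}$. (A self-contained alternative is to substitute into (\ref{tojdestvo_glavnoe}) with $F(a,b)=\frac{1}{3}(2ab+ba)$ and check it holds — a finite verification, since only twelve products are nonzero, e.g. with a modification of the code of \cite{kadyrov}.)

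For the remaining two assertions I would exhibit explicit witnesses. The algebra is not commutative: $e_1e_2=-3e_2\ne 3e_2=e_2e_1$ (the base field has characteristic zero); since every Jordan algebra is commutative, $\widetilde{\widetilde{W(2)}}$ is not Jordan. For non-Leibniz-ness I would test the Leibniz identity on the triple $(e_1,e_1,e_2)$: on one side $(e_1e_1)e_2=-(e_1e_2)=3e_2$, while $(e_1e_2)e_1+e_1(e_1e_2)=-3(e_2e_1)-3(e_1e_2)=-9e_2+9e_2=0$, so the right Leibniz identity $(xy)z=(xz)y+x(yz)$ fails; symmetrically $e_2(e_1e_1)=-3e_2\ne 0=(e_2e_1)e_1+e_1(e_2e_1)$ shows the left Leibniz identity fails as well. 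Hence $\widetilde{\widetilde{W(2)}}$ is not a Leibniz algebra in either sense.

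The only step where anything can go wrong is terminality; the other two points are two-line computations. Taking the degeneration route, the ``hard part'' reduces to (i) the bookkeeping check that the displayed parametrized basis really yields the table of $\widetilde{\widetilde{W(2)}}$ as $t\to 0$, and (ii) invoking the standard fact that a subvariety of the variety of algebra structures cut out by polynomial identities is closed under degeneration — both routine. If instead the terminal identity is checked by hand, the obstacle is purely the length of the computation, which stays small because the multiplication table is so sparse.
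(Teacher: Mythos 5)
Your proposal is correct, and for the terminality part it takes a genuinely different route from the paper. The paper states this lemma without an explicit argument; in the parallel lemmas the authors appeal to direct verification of the terminal identity (via a modification of the Wolfram code of \cite{kadyrov}), so the intended proof is a brute-force check of the degree-four identity on the sparse multiplication table. You instead observe that $\widetilde{\widetilde{W(2)}}$ is the IW contraction of $\widetilde{W(2)}$ with respect to its subalgebra $\langle e_1,e_3,\dots,e_8\rangle$ (your bookkeeping is right: only $f_2f_4=tf_3$, $f_2f_6=-tf_5$, $f_2f_7=tf_8$ acquire a factor of $t$, and they vanish at $t=0$), and then invoke the standard fact that the multilinear terminal identity cuts out a $\GL_8$-invariant Zariski-closed set of structure constants, hence is inherited under degeneration; since the structure constants in the parametrized basis are polynomials in $t$ that satisfy the identity for all $t\in\mathbb{F}^*$ and $\mathbb{F}$ is infinite, they satisfy it at $t=0$ as well. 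This buys a computation-free proof at the cost of depending on the (also unproved-in-print, but computationally verified) terminality of $\widetilde{W(2)}$ from the preceding lemma; the paper's direct verification is self-contained but opaque. Your witnesses for non-Jordan (non-commutativity, $e_1e_2=-3e_2\neq 3e_2=e_2e_1$ in characteristic zero) and for the failure of both the left and right Leibniz identities on the triples built from $e_1,e_2$ are computed correctly and match what any direct verification would produce.
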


\subsection{IW contraction of $S_2$ and $W_2$}
Thanks to \cite{kaylopo}, algebras $S_2$ and $W_2$ have also only one subalgebra of codimension $1,$
which are  $\langle e_1,   e_3, e_4 \rangle$ and $\langle e_1,  e_3, e_4, e_5, e_6 \rangle.$
Other important subalgebras of $S_2$ and $W_2$ are  $\langle e_1 \rangle,$ $\langle e_1, e_2 \rangle$ and
$\langle e_1, e_2, e_3, e_4 \rangle.$
All contractions of $S_2$ (and $W_2$) with respected to all cited subalgebras can be obtained
as $4$-dimensional subalgebras $\langle e_1, e_2, e_3, e_4 \rangle$ ($6$-dimensional subalgebras $\langle e_1, \ldots, e_6 \rangle$) of the following algebras   
$\overline{W(2)},$  ${\widehat{\widehat{W(2)}}},$ $ {\widetilde{W(2)} }$ and  $\widetilde{\widetilde{W(2)}}.$
All these subalgebras are non-Leibniz, non-Jordan terminal algebras.

\section{Varieties related to $W(2)$ and  its subalgebras}

\subsection{Identities}
Let $\F$ be a field of characteristic zero and 
$\F\langle x_1, x_2, \ldots, x_n\rangle$ the free nonassociative $\F$-algebra in $n$ indeterminates. 
Let ${\bf A}$ be any  algebra  and
${\mathfrak S}^n_{\bf A}$ the subspace of $\F\langle x_1, x_2, \ldots, x_n\rangle$ of all $n$-linear $w(x_1,x_2,\ldots,x_n)$ which vanish on ${\bf A}$ (so $w(x_1,x_2,\ldots,x_n)$ is of degree one in each variable). We have studied by direct verification the subspace ${\mathfrak S}^n_{\bf A}$ for $n=3,4$ of $W(2)$ and of its IW contractions mentioned in this paper. 

\begin{prop}\label{prop_id1}
In the following table, we summarize the dimension of the subspaces ${\mathfrak S}^n_{\bf A}$ for ${\bf A} \in \{ 
W(2), \overline{W(2)},  
\overline{\mathcal{S}_1}, \overline{\mathcal{S}_5}, \overline{\mathcal{S}_{-1,1}}, \overline{\mathcal{S}_{0,0}},
\widehat{W(2)}, \widehat{\widehat{W(2)}},
{\widetilde{W(2)} }\}$ and $n=3,4$. 

{\begin{table}[H] \tiny
\renewcommand{\arraystretch}{1.5}
\rule{0pt}{4ex}    
\begin{tabular}{|c|c|c|c|c|}

\hline
Algebra (${\bf A}$)  & $\textrm{dim}({\mathfrak S}^3_{\bf A})$ &  $\textrm{dim}({\mathfrak S}^4_{\bf A})$ & Comments\\ \hline

$W(2)$ &

0 & 

0 &

non-terminal conservative \\ \hline

$\overline{W(2)}$ &

0 & 

20 &

non-terminal, conservative \\ \hline

$\overline{\mathcal{S}_1}$ &

0 & 

64 &

terminal \\ \hline

$\overline{\mathcal{S}_5}$ &

0 & 

40 &

non-terminal, conservative \\ \hline

$\overline{\mathcal{S}_{-1, 1}}$ &

0 & 

64 &

terminal\\ \hline

$\overline{\mathcal{S}_{0,0}}$ &

0 & 

44 &

terminal \\ \hline

$\widehat{W(2)}$ &

0 & 

24 &

terminal \\ \hline

$\widehat{\widehat{W(2)}}$ &

0 & 

47 &

terminal \\ \hline

${\widetilde{W(2)}}$ &

0 & 

82 &

terminal \\ \hline

${\widetilde{\widetilde{W(2)}}}$ &

2 & 

101 &

terminal \\ \hline

\end{tabular}%
\label{summ}%
\end{table}}
Moreover, if ${\bf A}=\overline{\mathcal{S}_{\alpha, \beta}}$ then $\dim({\mathfrak S}^3_{\bf A})=0$ for $(\alpha, \beta)\neq(2,1)$ and $(\alpha, \beta)\neq(0, -3)$.
\end{prop}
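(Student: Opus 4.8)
\emph{Proof proposal.} The plan is to realise every entry of the table, and the parametric statement, as the corank of an explicit linear map (the ``restitution'' map) over $\mathbb Q$, whose matrix is written directly from the multiplication tables displayed above. Recall that the multilinear component $M_n$ of $\F\langle x_1,\dots,x_n\rangle$ --- the span of all products in which each of $x_1,\dots,x_n$ occurs exactly once --- has dimension $n!\,C_{n-1}$, where $C_{n-1}$ is the $(n-1)$st Catalan number; hence $\dim M_3=12$ and $\dim M_4=120$. Fix a basis of $M_n$ indexed by pairs (a permutation of $\{1,\dots,n\}$, a binary parenthesization shape). For an $8$-dimensional algebra ${\bf A}$ with basis $e_1,\dots,e_8$, a word $w\in M_n$ lies in $\mathfrak S^n_{\bf A}$ if and only if $w(e_{i_1},\dots,e_{i_n})=0$ for every tuple $(i_1,\dots,i_n)\in\{1,\dots,8\}^n$; writing this in coordinates produces a matrix $\Phi_{\bf A}$ with $n!\,C_{n-1}$ columns and $8^{n+1}$ rows (entries: integer polynomials in the structure constants of ${\bf A}$), and $\dim\mathfrak S^n_{\bf A}=\dim M_n-\operatorname{rank}\Phi_{\bf A}$.

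First I would substitute the numerical structure constants of each of $W(2),\ \overline{W(2)},\ \overline{\mathcal S_1},\ \overline{\mathcal S_5},\ \overline{\mathcal S_{-1,1}},\ \overline{\mathcal S_{0,0}},\ \widehat{W(2)},\ \widehat{\widehat{W(2)}},\ \widetilde{W(2)}$ and $\widetilde{\widetilde{W(2)}}$, and compute $\operatorname{rank}\Phi_{\bf A}$ over $\mathbb Q$ exactly for $n=3,4$; subtracting from $12$, resp.\ $120$, gives the table. (This is the ``direct verification'' referred to in the text, and can be organised with a modification of the Wolfram code of \cite{kadyrov}.) A useful built-in consistency check is that an IW contraction can only enlarge the space of multilinear identities, so along the chains $\overline{W(2)}\to\overline{\mathcal S_i}$ and $W(2)\to\widehat{W(2)}\to\widehat{\widehat{W(2)}}\to\widetilde{W(2)}\to\widetilde{\widetilde{W(2)}}$ the numbers must be nondecreasing, as they are.

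For the ``moreover'' clause I would treat $\overline{\mathcal S_{\alpha,\beta}}$ symbolically: by the parametric multiplication table above its structure constants are polynomials in $\alpha,\beta$, so for $n=3$ the matrix $\Phi_{\overline{\mathcal S_{\alpha,\beta}}}$ has $12$ columns and entries in $\mathbb Q[\alpha,\beta]$. One numerical specialization shows the rank is generically $12$, so $\dim\mathfrak S^3=0$ off the proper closed set $Z=\{\operatorname{rank}<12\}=V(I)$, where $I\subseteq\mathbb Q[\alpha,\beta]$ is the ideal generated by the $12\times12$ minors. Computing a Gr\"obner basis of $I$ (or eliminating one variable by resultants) one finds $I$ already contains a nonzero polynomial in $\alpha$ alone and a nonzero polynomial in $\beta$ alone whose common zero locus is exactly $\{(2,1),(0,-3)\}$; hence $Z\subseteq\{(2,1),(0,-3)\}$, which is the assertion.

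The main obstacle is computational size: for $n=4$ the matrices have $120$ columns and $8^5=32768$ rows, and several coranks are small (ranks close to $120$, e.g.\ $120-82$ for $\widetilde{W(2)}$), so the exact rank computations are the heavy step and deserve a cross-check modulo a few primes. The subtler point is the parametric elimination over $\mathbb Q[\alpha,\beta]$: an incomplete computation there could hide an entire curve of exceptional parameters, so as a safeguard I would also exhibit a concrete nonzero trilinear identity at $(\alpha,\beta)=(2,1)$ and at $(\alpha,\beta)=(0,-3)$ and verify the absence of such identities at several other rational parameter values.
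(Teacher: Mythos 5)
Your proposal is correct and follows essentially the same route as the paper, whose proof is simply the statement that the spaces ${\mathfrak S}^n_{\bf A}$ were determined by writing down a general $n$-linear polynomial and solving $w=0$ on ${\bf A}$ with Wolfram; your linear-algebra formulation (corank of the evaluation matrix on basis tuples, with $\dim M_3=12$, $\dim M_4=120$) is just a careful organization of that computation. Your treatment of the parametric family $\overline{\mathcal S_{\alpha,\beta}}$ via the vanishing locus of the $12\times12$ minors is more explicit than anything in the paper and is a sound way to certify the ``moreover'' clause.
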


\begin{proof}

We have determined the spaces ${\mathfrak S}^n_{\bf A}$ for $n=3,4$ by constructing an arbitrary $n$-linear map $w(x_1,\ldots ,x_n)$ and solving $w(x_1,\ldots ,x_n)=0$ in ${\bf A}$ using Wolfram.


\end{proof}

\begin{prop}\label{prop_id2}
If ${\bf A}=\overline{\mathcal{S}_{2,1}}$ then $\dim({\mathfrak S}^3_{\bf A})=3$ and a basis of the $\F$-vector space ${\mathfrak S}^3_{\bf A}$ is the set of identities: 
{\tiny\begin{multicols}{3}
\begin{enumerate}
\item $x_{1}(x_{2} x_{3}) - x_{2} (x_{1} x_{3}),$

\item $x_{2} (x_{3} x_{1}) - x_{3} (x_{2} x_{1}),$

\item $x_{3} (x_{1} x_{2}) - x_{1} (x_{3} x_{2}) .$
\end{enumerate}
\end{multicols}}
\end{prop}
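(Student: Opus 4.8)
The plan is to prove both assertions --- that $\dim({\mathfrak S}^3_{\bf A})=3$ and that the three listed trilinear expressions $I_1,I_2,I_3$ form a basis --- by establishing the inclusion $\langle I_1,I_2,I_3\rangle\subseteq{\mathfrak S}^3_{\bf A}$ together with the bound $\dim({\mathfrak S}^3_{\bf A})\le 3$, where ${\bf A}=\overline{\mathcal S_{2,1}}$.

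For the inclusion I would first isolate what makes the point $(\alpha,\beta)=(2,1)$ special in the multiplication table of $\overline{\mathcal S_{\alpha,\beta}}$: there one has $L_{e_1}=-\Id$ and $L_{e_5}=-2\,\Id$, while $L_{e_4}=L_{e_8}=0$ and $L_{e_6}=L_{e_7}=-L_{e_3}$, so the left multiplication operators of ${\bf A}$ span $\langle\Id,L_{e_2},L_{e_3}\rangle\subseteq\Endo({\bf A})$; one then checks in one line that $L_{e_2}L_{e_3}=L_{e_3}L_{e_2}=0$ (since $L_{e_2}$ is supported on $e_1\mapsto 3e_2$ only, and $L_{e_3}$ maps into $\langle e_3,e_4,e_6,e_7\rangle$ while killing $e_2$). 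Hence this operator algebra is commutative, i.e.\ ${\bf A}$ satisfies the left-commutativity identity $a(bc)=b(ac)$. Each $I_k$ is a relabelling of this identity with $x_k$ placed as the inner right factor, namely $I_k=[L_{x_i},L_{x_j}](x_k)$ with $\{i,j,k\}=\{1,2,3\}$, so $I_1,I_2,I_3$ all vanish on ${\bf A}$; and since they involve six pairwise distinct monomials of multidegree $(1,1,1)$ they are linearly independent. This gives $\dim({\mathfrak S}^3_{\bf A})\ge 3$ and shows $\{I_1,I_2,I_3\}$ is a basis once the dimension is pinned to $3$. It also makes transparent why $(2,1)$ is exceptional among the $\overline{\mathcal S_{\alpha,\beta}}$ of Proposition~\ref{prop_id1}.

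For the reverse bound I would proceed exactly as in the proof of Proposition~\ref{prop_id1}: take a generic multilinear element of the free algebra,
\[
w=\sum_{\sigma\in S_3}\lambda_\sigma\, x_{\sigma(1)}\big(x_{\sigma(2)}x_{\sigma(3)}\big)+\sum_{\sigma\in S_3}\mu_\sigma\,\big(x_{\sigma(1)}x_{\sigma(2)}\big)x_{\sigma(3)},
\]
with twelve unknown coefficients, impose $w(e_i,e_j,e_k)=0$ over all basis triples, and solve the resulting finite linear system over $\F$; by construction its solution space is ${\mathfrak S}^3_{\bf A}$. It is convenient to first work modulo $\langle I_1,I_2,I_3\rangle$, which reduces the left-nested part to three representatives and leaves nine parameters in all, and then substitute enough basis triples --- exploiting relations such as $e_2e_1=3e_2$ with $e_2^2=0$, $e_3e_3=-3e_4$, $e_3e_1=-2e_3-2e_7$ and $e_3e_5=e_6-e_7$ --- to force all nine to vanish. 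The step with no structural shortcut, and hence the main obstacle, is precisely this finite verification: one must ensure the chosen substitutions generate all the constraints, so that the solution space is neither cut below $\langle I_1,I_2,I_3\rangle$ nor over-estimated. This is routine but lengthy, and as in Proposition~\ref{prop_id1} it is most safely carried out by computer.
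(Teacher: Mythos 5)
Your proposal is correct, and it splits the work in a way the paper does not. The paper's justification for this proposition is the same as for Proposition~\ref{prop_id1}: set up the generic multilinear element $w$ with twelve coefficients, impose $w=0$ on all basis triples, and let Wolfram solve the linear system; both the dimension count and the basis drop out of that single computation. Your treatment of the upper bound $\dim({\mathfrak S}^3_{\bf A})\le 3$ is therefore the same finite verification the authors perform, and you are right that there is no structural shortcut for it. What you add is a genuine conceptual explanation of the lower bound: at $(\alpha,\beta)=(2,1)$ one indeed has $L_{e_1}=-\Id$, $L_{e_5}=-2\,\Id$, $L_{e_4}=L_{e_8}=0$, $L_{e_6}=L_{e_7}=-L_{e_3}$, and $L_{e_2}L_{e_3}=L_{e_3}L_{e_2}=0$ by the support argument you give, so the left multiplication operators span the commutative algebra $\langle \Id, L_{e_2},L_{e_3}\rangle$ and ${\bf A}$ satisfies $a(bc)=b(ac)$. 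The three listed identities are exactly $[L_{x_i},L_{x_j}](x_k)$ for the three cyclic choices of $k$, and their six monomials are pairwise distinct, so they are independent. This hand-checkable argument both certifies the basis (given the computed dimension) and explains why $(2,1)$ is the exceptional parameter in Proposition~\ref{prop_id1} and why $\mathfrak{st}^3_2=-(I_1+I_2+I_3)$ holds there (consistent with Proposition~\ref{stprop}), which the paper leaves implicit. The only part of your write-up that remains non-self-contained is the bound $\dim\le 3$, but the paper is in exactly the same position.
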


Now, consider the following identities:
\[
\mathfrak{st}^n_1=\sum\limits_{\sigma \in \mathbb S_n} (-1)^{\sigma} (\ldots(x_{\sigma(1)}x_{\sigma(2)})x_{\sigma(3)}\ldots) x_{\sigma(n)}
\mbox{ and }
\mathfrak{st}^n_2=\sum\limits_{\sigma \in \mathbb S_n} (-1)^{\sigma} x_{\sigma(n)}(\ldots x_{\sigma(3)}(x_{\sigma(2)}x_{\sigma(1)})\ldots).
\]

It is clear that if an algebra satisfies $\mathfrak{st}^n_1$ (resp. $\mathfrak{st}^n_2$) then it also satisfies $\mathfrak{st}^{n+1}_1$ (resp. $\mathfrak{st}^{n+1}_2$).

\begin{prop}\label{prop_id3}
If ${\bf A}=\overline{\mathcal{S}_{0,-3}}$ then $\dim({\mathfrak S}^3_{\bf A})=1$. This $\F$-vector space is generated by $$2 \mathfrak{st}^3_1 -3   \mathfrak{st}^3_2.$$

\end{prop}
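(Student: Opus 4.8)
The plan is to reduce the statement to a finite linear-algebra computation, in the spirit of the proof of Proposition~\ref{prop_id1}, and then to recognise the resulting one-dimensional space of identities as the line spanned by $2\mathfrak{st}^3_1-3\mathfrak{st}^3_2$. The space of multilinear elements of degree $3$ in $\F\langle x_1,x_2,x_3\rangle$ is $12$-dimensional, with basis the six left-normed monomials $(x_{\sigma(1)}x_{\sigma(2)})x_{\sigma(3)}$ and the six right-normed monomials $x_{\sigma(1)}(x_{\sigma(2)}x_{\sigma(3)})$, $\sigma\in\mathbb S_3$. Writing a generic such element as $w=\sum_{\sigma}c_\sigma(x_{\sigma(1)}x_{\sigma(2)})x_{\sigma(3)}+\sum_{\sigma}d_\sigma x_{\sigma(1)}(x_{\sigma(2)}x_{\sigma(3)})$, membership $w\in{\mathfrak S}^3_{\bf A}$ for ${\bf A}=\overline{\mathcal{S}_{0,-3}}$ is equivalent to $w(e_i,e_j,e_k)=0$ for all $1\le i,j,k\le 8$; expanding every $w(e_i,e_j,e_k)$ in $e_1,\dots,e_8$ by the multiplication table of $\overline{\mathcal{S}_{\alpha,\beta}}$ at $(\alpha,\beta)=(0,-3)$ turns this into a homogeneous linear system in the twelve unknowns $c_\sigma,d_\sigma$.

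First I would compute the rank of that system and check that its kernel is exactly one-dimensional. By Proposition~\ref{prop_id1} the value $(\alpha,\beta)=(0,-3)$ is already singled out as one of the only two parameters at which ${\mathfrak S}^3_{\overline{\mathcal{S}_{\alpha,\beta}}}$ is nonzero, so the real content is to pin the dimension down to $1$ rather than something larger. The bookkeeping can be cut down using structure: $\overline{\mathcal{S}_{0,-3}}$ is graded (with $e_1,e_5$ in degree $0$, $e_3,e_6,e_7$ in degree $1$, $e_4$ in degree $2$, and $e_2,e_8$ in suitable additional degrees, so that the grading group may be taken to be $\mathbb Z^3$), and moreover $e_4,e_8$ annihilate the algebra on the left and $e_2$ annihilates every basis vector other than $e_1$; hence only a small number of triples $(e_i,e_j,e_k)$ produce nontrivial equations.

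Second I would verify that the claimed generator lies in ${\mathfrak S}^3_{\bf A}$. Using $\mathfrak{st}^3_1=[x_1,x_2]x_3+[x_2,x_3]x_1+[x_3,x_1]x_2$ (with $[a,b]=ab-ba$) and the corresponding right-normed expression for $\mathfrak{st}^3_2$, one expands $2\mathfrak{st}^3_1-3\mathfrak{st}^3_2$ in the monomial basis above and confirms, triple by triple, that $2\mathfrak{st}^3_1(e_i,e_j,e_k)-3\mathfrak{st}^3_2(e_i,e_j,e_k)=0$ in $\overline{\mathcal{S}_{0,-3}}$; by the reductions above only a handful of triples need be inspected. Since $2\mathfrak{st}^3_1-3\mathfrak{st}^3_2$ is nonzero in $\F\langle x_1,x_2,x_3\rangle$ (its left-normed part $2\mathfrak{st}^3_1$ already is) and the kernel of the system is one-dimensional, this element spans ${\mathfrak S}^3_{\bf A}$, which is the assertion of the proposition.

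Both steps are computational and, apart from the rank count, routine; I expect the main obstacle to be organisational rather than conceptual — controlling the $8^3$ evaluations (best automated, as in Proposition~\ref{prop_id1}, with a short Wolfram computation) and then recognising that the unique identity can be written in the closed form $2\mathfrak{st}^3_1-3\mathfrak{st}^3_2$ instead of as an opaque combination of the twelve degree-$3$ monomials.
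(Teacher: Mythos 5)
Your proposal is correct and follows essentially the same route as the paper: the authors also establish these identity spaces by writing a generic multilinear degree-$3$ element, imposing vanishing on all basis triples of $\overline{\mathcal{S}_{0,-3}}$, and solving the resulting linear system by computer (as stated in the proof of Proposition~\ref{prop_id1}), after which the one-dimensional kernel is recognised as the span of $2\mathfrak{st}^3_1-3\mathfrak{st}^3_2$. Your additional observations (the grading and the left-annihilating basis vectors) are a harmless organisational refinement, not a different argument.
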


\begin{prop}\label{prop_id4}
If ${\bf A}=\widetilde{\widetilde{W(2)}}$ then $\dim({\mathfrak S}^3_{\bf A})=2$ and a basis of the $\F$-vector space ${\mathfrak S}^3_{\bf A}$ is the set of identities $\mathfrak{st}^3_1, \mathfrak{st}^3_2$.



\end{prop}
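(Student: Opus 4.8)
The plan is to compute the space $\mathfrak{S}^3_{\bf A}$ for ${\bf A}=\widetilde{\widetilde{W(2)}}$ explicitly and exhibit $\mathfrak{st}^3_1,\mathfrak{st}^3_2$ as a basis. First I would write down a generic trilinear word $w(x_1,x_2,x_3)=\sum_{\sigma\in\mathbb S_3}\big(a_\sigma (x_{\sigma(1)}x_{\sigma(2)})x_{\sigma(3)}+b_\sigma x_{\sigma(1)}(x_{\sigma(2)}x_{\sigma(3)})\big)$, i.e. a linear combination with $12$ unknown coefficients over the two association types. Since $\widetilde{\widetilde{W(2)}}$ is $8$-dimensional, the condition $w\equiv 0$ is a system of linear equations obtained by substituting basis vectors $e_i,e_j,e_k$ and reading off coordinates; this is exactly the computation underlying Proposition~\ref{prop_id1}, which already asserts $\dim(\mathfrak{S}^3_{\bf A})=2$. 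So the content of the present proposition is not the dimension but the claim that the two explicit standard identities $\mathfrak{st}^3_1$, $\mathfrak{st}^3_2$ actually lie in this space and are linearly independent.

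Linear independence is immediate: $\mathfrak{st}^3_1$ involves only left-normed products $(x_{\sigma(1)}x_{\sigma(2)})x_{\sigma(3)}$ and $\mathfrak{st}^3_2$ only right-normed products $x_{\sigma(3)}(x_{\sigma(2)}x_{\sigma(1)})$, so they are supported on disjoint sets of basis monomials of $\F\langle x_1,x_2,x_3\rangle$ and neither is a scalar multiple of the other. The real work is to verify that both vanish identically on $\widetilde{\widetilde{W(2)}}$. For this I would exploit the very sparse multiplication table: the only nonzero products among the first listed generators are $e_1 e_i$ and $e_i e_1$ for $i\in\{1,2,3,6,7\}$ together with $e_j e_1$ for those $j$, and crucially every product lands in the one-dimensional set spanned by the relevant target vector and, more importantly, $e_i e_j = 0$ whenever neither $i$ nor $j$ equals $1$. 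Hence in any triple product at most the factor $e_1$ can "act", and a product of three basis vectors is nonzero only when at least two of them are $e_1$. I would therefore reduce the check to substitutions $(x_1,x_2,x_3)$ taken from multisets containing $e_1$ with multiplicity $\ge 2$ — essentially the cases $(e_1,e_1,e_k)$ in all orders, plus $(e_1,e_1,e_1)$ — and observe that in each such case the alternating sum over $\mathbb S_3$ telescopes to zero because the two copies of $e_1$ are interchanged by a transposition of sign $-1$ while the monomial value is symmetric in them (for $\mathfrak{st}^3_1$, using that $(e_1 e_1)e_k$, $(e_1 e_k)e_1$, $(e_k e_1)e_1$ are permutation-paired; likewise for $\mathfrak{st}^3_2$).

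The main obstacle is bookkeeping rather than conceptual: one must be careful that "$e_1$ acts like a diagonalizable operator with the listed eigenvalues" is used correctly, and that the antisymmetrization genuinely kills the surviving terms — in particular the $(e_1,e_1,e_1)$ term, where one checks $\mathfrak{st}^3_1(e_1,e_1,e_1)=\big(\sum_\sigma(-1)^\sigma\big)(e_1e_1)e_1=0$ trivially. After disposing of all these cases one concludes $\mathfrak{st}^3_1,\mathfrak{st}^3_2\in\mathfrak{S}^3_{\bf A}$; combined with the dimension count $\dim(\mathfrak{S}^3_{\bf A})=2$ from Proposition~\ref{prop_id1} and the linear independence above, they form a basis. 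Alternatively, and more in the spirit of the other proofs in this section, one simply records that the Wolfram computation solving $w(x_1,x_2,x_3)=0$ returns a two-dimensional solution space whose row-reduced basis is precisely $\{\mathfrak{st}^3_1,\mathfrak{st}^3_2\}$; I would present the structural argument as the justification and cite the computation as confirmation.
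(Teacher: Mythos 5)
Your proposal is correct, and it is genuinely different in character from what the paper does: the paper offers no written proof of this proposition at all, relying (as for Proposition~\ref{prop_id1}) on a Wolfram computation that solves $w(x_1,x_2,x_3)=0$ for a generic $12$-parameter trilinear word. Your structural argument is sound and hand-checkable. The key observation --- that in $\widetilde{\widetilde{W(2)}}$ one has $e_ie_j=0$ unless $1\in\{i,j\}$, and that every nonzero product lands outside $\langle e_1\rangle$ except for $e_1e_1=-e_1$ --- does imply that any parenthesized product of three basis vectors vanishes unless $e_1$ occurs at least twice among the arguments; and in that remaining case the alternating sum over $\mathbb S_3$ cancels in pairs indexed by the transposition swapping the two equal slots, so $\mathfrak{st}^3_1$ and $\mathfrak{st}^3_2$ lie in ${\mathfrak S}^3_{\bf A}$ by multilinearity. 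Linear independence is indeed immediate from the disjointness of the two association types $(xy)z$ and $x(yz)$. The one thing your argument does not deliver on its own is the upper bound $\dim({\mathfrak S}^3_{\bf A})\le 2$; you correctly outsource that to the table in Proposition~\ref{prop_id1}, which is itself established only by the computer calculation, so the computational ingredient is not eliminated, merely localized. What your route buys is a transparent, verifiable reason why precisely the two standard identities appear here, which the paper's bare computation does not provide; what the paper's route buys is uniformity across all the algebras in the table. One small caution: your phrase ``the monomial value is symmetric in them'' is loose --- the individual monomials $(e_1e_1)e_k$, $(e_1e_k)e_1$, $(e_ke_1)e_1$ need not be equal; the cancellation is between the two permutations producing the \emph{same} argument tuple with opposite signs, which your parenthetical about permutation-pairing already captures correctly.
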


We have studied the space ${\mathfrak S}^n_{\bf A}$ for the subalgebras of $W(2)$ mentioned in this paper. We have also studied the identities $\mathfrak{st}^n_1$ and $\mathfrak{st}^n_2$ for these subalgebras for $n=3,4,5$.

\begin{prop}\label{prop_id5}
In the following table, we summarize the dimension of the subspaces ${\mathfrak S}^n_{\bf A}$ for ${\bf A}$ a subalgebra of $W(2)$ and $n=3,4$.

{\begin{table}[H] \tiny
\renewcommand{\arraystretch}{1.5}
\rule{0pt}{4ex}    
\begin{tabular}{|l|c|c|l|}

\hline
Subalgebra $({\bf A})$  & $\textrm{dim}({\mathfrak S}^3_{\bf A})$ &  $\textrm{dim}({\mathfrak S}^4_{\bf A})$ & Comments\\ \hline

$B_2:=\langle e_1, e_3, e_4, e_5, e_6, e_7, e_8\rangle$ &

0 & 

64 &

  no $\mathfrak{st}^4_1$, no $\mathfrak{st}^4_2$, $\mathfrak{st}^5_1, \mathfrak{st}^5_2$ \\ \hline
$W_2=\langle e_1, e_2, e_3, e_4, e_5, e_6\rangle$ &

0 & 

24 &

  no $\mathfrak{st}^5_1$, no $\mathfrak{st}^4_2,$  $\mathfrak{st}^5_2$ \\ \hline
$C_2:=\langle e_1, e_3, e_4, e_5, e_6\rangle$ &

0 & 

64 &

no $\mathfrak{st}^4_1$, no $\mathfrak{st}^4_2$, $\mathfrak{st}^5_1,$ $\mathfrak{st}^5_2$ \\ \hline
$S_2=\langle e_1, e_2, e_3, e_4\rangle$ &

3 & 

86 &

no $\mathfrak{st}^4_1$, no $\mathfrak{st}^4_2$, $\mathfrak{st}^5_1,$ $\mathfrak{st}^5_2$ \\ \hline
$D_2:=\langle e_1, e_3, e_4\rangle$ &

6 & 

110 &

$\mathfrak{st}^3_1, \mathfrak{st}^3_2$ \\ \hline
$E_2:=\langle e_1, e_2\rangle$ &

8 & 

115 &

 $\mathfrak{st}^3_1, \mathfrak{st}^3_2$\\ \hline




\end{tabular}%
\label{summ2}%
\end{table}}

Moreover, ${\mathfrak S}^4_{ B_2}={\mathfrak S}^4_{ C_2};$
all present algebras are terminal,  non-Leibniz and non-Jordan.

\end{prop}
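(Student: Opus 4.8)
The plan is to carry out, for each of the six subalgebras $\mathbf A\in\{B_2,W_2,C_2,S_2,D_2,E_2\}$, exactly the computation already used in the proof of Proposition~\ref{prop_id1}. First I would verify against the multiplication table of $W(2)$ that each of the listed spanning sets is closed under the product — a finite check — so that $\mathbf A$ is a subalgebra carrying the restricted structure constants. Then, for $n=3$ and $n=4$, I would form a generic element $w(x_1,\dots,x_n)$ of the multilinear component of the free nonassociative algebra on $n$ generators (whose dimension is $n!$ times the number of binary bracketings of $n$ letters, i.e. $12$ for $n=3$ and $120$ for $n=4$), impose $w(e_{i_1},\dots,e_{i_n})=0$ for every tuple $(e_{i_1},\dots,e_{i_n})$ of basis vectors of $\mathbf A$, and compute over $\F$ the nullity of the resulting homogeneous linear system; that nullity is $\dim{\mathfrak S}^n_{\mathbf A}$, yielding every entry of the table. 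The same evaluation procedure, applied to $\mathfrak{st}^n_1$ and $\mathfrak{st}^n_2$ for $n=3,4,5$, settles each ``$\mathfrak{st}$'' assertion in the comments column: a single non-vanishing basis tuple certifies a ``no'', and the ascending-chain property noted above (if $\mathbf A$ satisfies $\mathfrak{st}^n_i$ then it satisfies $\mathfrak{st}^{n+1}_i$, and contrapositively) removes several of these checks.

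For the equality ${\mathfrak S}^4_{B_2}={\mathfrak S}^4_{C_2}$ and for the mutual consistency of the numbers in the table I would use the chain of subalgebra inclusions $D_2\subseteq C_2\subseteq B_2$, $D_2\subseteq C_2\subseteq W_2$, $D_2\subseteq S_2$ and $E_2\subseteq S_2$ (again routine checks against the $W(2)$ table). Since multilinear polynomial identities of an algebra are inherited by its subalgebras, $\mathbf B\subseteq\mathbf A$ forces ${\mathfrak S}^n_{\mathbf A}\subseteq{\mathfrak S}^n_{\mathbf B}$; this is a built-in sanity check for the table (for instance $\dim{\mathfrak S}^4_{W_2}=24\le\dim{\mathfrak S}^4_{C_2}=64\le\dim{\mathfrak S}^4_{D_2}=110$ and $\dim{\mathfrak S}^3_{S_2}=3\le\dim{\mathfrak S}^3_{E_2}=8$), and applied to $C_2\subseteq B_2$ it gives ${\mathfrak S}^4_{B_2}\subseteq{\mathfrak S}^4_{C_2}$; together with the computed value $\dim{\mathfrak S}^4_{B_2}=\dim{\mathfrak S}^4_{C_2}=64$ this forces the claimed equality.

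For the final sentence, recall that the terminal algebras form the variety defined by identity~(\ref{tojdestvo_glavnoe}) with $F(a,b)=\tfrac13(2ab+ba)$, hence a class closed under subalgebras; since $W_2$, $S_2$ and $B_2$ are terminal (all recorded earlier in the paper) and every algebra in the list is a subalgebra of one of these three, all six are terminal. For the non-Leibniz and non-Jordan assertions, closure of those varieties under subalgebras lets us reduce to the two ``smallest'' members: each of $B_2,C_2,W_2,S_2$ contains $D_2$, and $S_2$ also contains $E_2$, so it suffices to exhibit a violation in $D_2$ and in $E_2$. Both are non-commutative ($e_1e_3=e_3\neq-2e_3=e_3e_1$ in $D_2$, and $e_1e_2=-3e_2\neq 3e_2=e_2e_1$ in $E_2$), hence neither is Jordan; and a single evaluation of the left Leibniz identity (e.g. $e_1(e_1e_3)=e_3$ whereas $(e_1e_1)e_3+e_1(e_1e_3)=0$ in $D_2$), and likewise of the right Leibniz identity, shows that neither is Leibniz.

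The only ingredients above that are not purely mechanical are the subalgebra inclusions — they are what make it unnecessary to re-run the rank computation separately for each algebra and are indispensable both for ${\mathfrak S}^4_{B_2}={\mathfrak S}^4_{C_2}$ and for the terminality and non-Leibniz/non-Jordan conclusions — together with the bookkeeping needed to use a correct basis of the multilinear component so the nullity is read off correctly. I therefore expect the real obstacle to be computational scale rather than any conceptual difficulty: for $B_2$ (dimension $7$) the $n=4$ system already consists of $7^4=2401$ $V$-valued relations, i.e. $7^5$ scalar linear equations, in a $120$-dimensional space of unknowns, so the ranks must be obtained symbolically (in Wolfram) rather than by hand, and the main care is in organizing and trusting that computation.
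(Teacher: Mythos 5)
Your proposal is correct and takes essentially the same route as the paper, whose (implicit) proof is the same direct symbolic computation described for Proposition~\ref{prop_id1}: build a generic multilinear $w$, evaluate on all basis tuples, and read off the nullity in Wolfram. The structural additions you supply --- that $C_2\subseteq B_2$ gives ${\mathfrak S}^4_{B_2}\subseteq{\mathfrak S}^4_{C_2}$ so equal dimensions force equality, and that closure of the terminal, Leibniz and Jordan varieties under subalgebras reduces the final sentence to the already-established terminality of $B_2$, $W_2$, $S_2$ plus explicit counterexamples in $D_2$ and $E_2$ --- are sound and correctly justify the parts of the statement the paper leaves unargued.
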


\begin{prop}\label{prop_id6}
If ${\bf A}=S_2$, the subalgebra of $W(2)$ generated by $e_1, \ldots, e_4$, then a basis of the $\F$-vector space ${\mathfrak S}^3_{\bf A}$ is the set of identities: 
{\tiny 
\begin{enumerate}
   \item $30 x_{1} (x_{2} x_{3})-42 x_{1}
   (x_{3} x_{2})-25 x_{2}
   (x_{1} x_{3})+7 x_{2} (x_{3}
   x_{1})+39 x_{3} (x_{1} x_{2})-9
   x_{3} (x_{2} x_{1})+10
   (x_{1} x_{3}) x_{2}+15
   (x_{2} x_{1}) x_{3}-19
   (x_{3} x_{1}) x_{2}-6 (x_{3}
   x_{2}) x_{1}$,
   \item $-5 x_{1} (x_{2} x_{3})+11 x_{1}
   (x_{3} x_{2})+5 x_{2} (x_{1}
   x_{3})-11 x_{2} (x_{3} x_{1})-12
   x_{3} (x_{1} x_{2})+12 x_{3}
   (x_{2} x_{1})-5 (x_{1}
   x_{3}) x_{2}+5 (x_{2} x_{3})
   x_{1}+2 (x_{3} x_{1}) x_{2}-2
   (x_{3} x_{2}) x_{1} $,
   \item $ -3 x_{1} (x_{2} x_{3}) - 3 x_{1} (x_{3} x_{2}) + 
    4 x_{2} (x_{1} x_{3}) - 4 x_{2} (x_{3} x_{1}) + 
    3 x_{3} (x_{1} x_{2}) + 3 x_{3} (x_{2} x_{1}) + 3 (x_{1} x_{2}) x_{3} + 
    2 (x_{1} x_{3}) x_{2} - 2 (x_{3} x_{1}) x_{2} - 3 (x_{3} x_{2}) x_{1}$.
      
\end{enumerate}} 

\end{prop}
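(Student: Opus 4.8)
The plan is to realize $\mathfrak S^3_{S_2}$ as the kernel of an explicit linear map and then recognize the three displayed elements as a basis of that kernel. Recall first that the multilinear component of degree $3$ in $\F\langle x_1,x_2,x_3\rangle$ has dimension $12$: it is spanned by the monomials $(x_{\sigma(1)}x_{\sigma(2)})x_{\sigma(3)}$ and $x_{\sigma(1)}(x_{\sigma(2)}x_{\sigma(3)})$ for $\sigma\in\mathbb S_3$ (two association types, $3!$ permutations each). Write a generic $w(x_1,x_2,x_3)$ as a combination of these $12$ monomials with indeterminate coefficients; then $\mathfrak S^3_{S_2}$ is exactly the subspace of solutions for which $w$ vanishes on $S_2$.

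Since $w$ is multilinear, $w$ vanishes on $S_2=\langle e_1,e_2,e_3,e_4\rangle$ if and only if $w(e_i,e_j,e_k)=0$ for all $(i,j,k)\in\{1,2,3,4\}^3$. Using the multiplication table of $S_2$ inside $W(2)$ (in particular $e_2e_3=2e_1$, $e_3e_2=-e_1$, $e_3e_3=-3e_4$, $e_2e_4=e_3$, $e_1e_\ell=\mp e_\ell,\pm 3e_\ell$, and $e_4e_\ell=0$), each of the $64$ evaluations, expanded in the basis $e_1,\dots,e_4$, produces four scalar linear equations in the $12$ coefficients; their union is a homogeneous linear system. The work can be cut down drastically with the $\Z$-grading $\mathrm{wt}(e_1)=0,\ \mathrm{wt}(e_2)=-1,\ \mathrm{wt}(e_3)=1,\ \mathrm{wt}(e_4)=2$, under which $S_2$ is graded: $\mathfrak S^3_{S_2}$ then decomposes according to the multidegree of the triple of substituted arguments, so only a few small homogeneous pieces need to be solved; the scaling symmetry of $S_2$ gives further reductions.

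Now solve the system. By Proposition~\ref{prop_id5} its solution space has dimension $3$, so the computation should return rank $9$ (out of $12$). It then remains to confirm that the three displayed elements form a basis of this $3$-dimensional space. For this one (a) substitutes all triples $(e_i,e_j,e_k)$ into each of the three identities and checks, via the table of $S_2$, that the result is $0$ — routine but lengthy, exactly the kind of direct verification carried out elsewhere in the paper with Wolfram — and (b) checks that the three coefficient vectors in $\F^{12}$ are linearly independent, which is immediate from inspecting a handful of coordinates (e.g. the coefficients of $(x_2x_1)x_3$, $(x_1x_3)x_2$ and $(x_1x_2)x_3$ already separate the three). Combined with $\dim\mathfrak S^3_{S_2}=3$, this finishes the proof; in fact, given Proposition~\ref{prop_id5}, steps (a) and (b) alone suffice and one need not solve the full system.

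The main obstacle is purely computational: the linear system has $64$ vector-valued evaluations, i.e. $256$ scalar equations in $12$ unknowns, and the relevant cancellations are sign-sensitive because of the off-diagonal products $e_2e_3=2e_1$, $e_3e_2=-e_1$, etc., so careful bookkeeping is required. There is no conceptual difficulty once the $12$ monomials are ordered and the system set up; the grading reduces each graded summand to a small, by-hand-checkable subsystem, and a computer algebra system handles the whole thing uniformly, consistent with the methodology used throughout the paper.
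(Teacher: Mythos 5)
Your proposal is correct and follows essentially the same route as the paper: write a generic element of the $12$-dimensional multilinear degree-$3$ component, impose vanishing on all basis triples of $S_2$, and solve the resulting homogeneous linear system by direct (computer-assisted) computation, then check that the three displayed identities span the solution space. The $\Z$-grading $\mathrm{wt}(e_1)=0$, $\mathrm{wt}(e_2)=-1$, $\mathrm{wt}(e_3)=1$, $\mathrm{wt}(e_4)=2$ you introduce is a correct and useful refinement for hand-checking, but does not change the method.
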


Finally, we have studied the family of identities $\mathfrak{st}^n_1$ and $\mathfrak{st}^n_2$ for $W(2)$ and its contractions.

\begin{prop}
\label{stprop}
In the following table, we summarize which identities from the families $\mathfrak{st}^n_1$ and $\mathfrak{st}^n_1$ are satisfies for every contraction of $W(2)$, for $n=3,4,5$.

{\begin{table}[H] \tiny
\renewcommand{\arraystretch}{1.5}
\rule{0pt}{4ex}    
\begin{tabular}{|c|c|c|c|c|c|c|c|}

\hline
Algebra & $\mathfrak{st}^3_1$ &  $\mathfrak{st}^4_1$ & $\mathfrak{st}^5_1$ & $\mathfrak{st}^3_2$ &  $\mathfrak{st}^4_2$ & $\mathfrak{st}^5_2$\\ \hline

$W(2)$ &

\xmark & 

\xmark &

\xmark &

\xmark &

\xmark &

\cmark \\ \hline

$\overline{W(2)}$ &

\xmark & 

\xmark &

\cmark &

\xmark &

\xmark &

\cmark \\ \hline

$\overline{\mathcal{S}_1}$ &

\xmark & 

\xmark &

\cmark &

\xmark &

\xmark &

\cmark \\ \hline

$\overline{\mathcal{S}_5}$ &

\xmark & 

\xmark &

\cmark &

\xmark &

\xmark &

\cmark \\ \hline

$\overline{\mathcal{S}_{\alpha,\beta}}$ &

\xmark & 

$\cmark_{\alpha=\frac{3+\beta}{2}}$ &

\cmark &

$\cmark_{(\alpha, \beta)=(2, 1)}$ &

$\cmark_{\alpha=\frac{3+\beta}{2}}$ &

\cmark \\ \hline

$\widehat{W(2)}$ &

\xmark & 

\xmark &

\xmark &

\xmark &

\xmark &

\cmark \\ \hline

$\widehat{\widehat{W(2)}}$ &

\xmark & 

\xmark &

\cmark &

\xmark &

\xmark &

\cmark \\ \hline

${\widetilde{W(2)}}$ &

\xmark & 

\cmark &

\cmark &

\xmark &

\cmark &

\cmark \\ \hline

\end{tabular}%
\label{summ3}%
\end{table}

}

\end{prop}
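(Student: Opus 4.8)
The plan is to combine three elementary reductions with a short list of direct evaluations of the type used for Propositions~\ref{prop_id1}--\ref{prop_id4}. (i)~Each of $\mathfrak{st}^n_1,\mathfrak{st}^n_2$ is $n$-linear and alternating in $x_1,\dots,x_n$, so it is an identity of a finite-dimensional algebra ${\bf A}$ if and only if it vanishes on every $n$-element subset of a fixed basis of ${\bf A}$, each such value being an explicit sum of $n!$ products; hence a $\cmark$ in the table means that all these sums vanish and a $\xmark$ is witnessed by one basis tuple on which the value is nonzero. (ii)~By the remark preceding the statement, $\mathfrak{st}^n_i$ implies $\mathfrak{st}^{n+1}_i$, so in each row a $\xmark$ at some degree forces $\xmark$ at all smaller degrees and a $\cmark$ at some degree forces $\cmark$ at all larger ones; thus one only needs to locate the threshold degree in $\{3,4,5\}$ for each algebra and each family. (iii)~Satisfying a fixed multilinear identity is a Zariski-closed, $\mathrm{GL}$-invariant condition on the structure constants, hence is inherited by every degeneration; each algebra in the table is obtained from $W(2)$ by at most two of the IW contractions of the previous section, so is a degeneration of $W(2)$, and in addition $\overline{\mathcal S_1},\overline{\mathcal S_5},\overline{\mathcal S_{\alpha,\beta}}$ are degenerations of $\overline{W(2)}$.

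Using these, most of the table fills in at once. The degree-$3$ columns come from Proposition~\ref{prop_id1}, which gives $\dim\mathfrak S^3_{\bf A}=0$ (hence no $\mathfrak{st}^3_1,\mathfrak{st}^3_2$) for $W(2),\overline{W(2)},\overline{\mathcal S_1},\overline{\mathcal S_5},\widehat{W(2)},\widehat{\widehat{W(2)}},\widetilde{W(2)}$, together with Propositions~\ref{prop_id2} and~\ref{prop_id3}: for $\overline{\mathcal S_{\alpha,\beta}}$ the only parameter values with $\mathfrak S^3_{\bf A}\neq 0$ are $(\alpha,\beta)=(2,1)$ and $(0,-3)$, and the explicit bases there show that $\mathfrak{st}^3_1$ is never in $\mathfrak S^3_{\bf A}$ while $\mathfrak{st}^3_2$ lies in it precisely for $(\alpha,\beta)=(2,1)$ — indeed $\mathfrak{st}^3_2=-\big[(x_1(x_2x_3)-x_2(x_1x_3))+(x_2(x_3x_1)-x_3(x_2x_1))+(x_3(x_1x_2)-x_1(x_3x_2))\big]$, which lies in the span given in Proposition~\ref{prop_id2}. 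For the degree-$5$ columns the decisive single computation is $W(2)\models\mathfrak{st}^5_2$: by (iii) it makes the entire $\mathfrak{st}^5_2$ column $\cmark$. Likewise, checking $\overline{W(2)}\models\mathfrak{st}^5_1$ directly and invoking (iii) for $\overline{W(2)}\to\overline{\mathcal S_1},\overline{\mathcal S_5},\overline{\mathcal S_{\alpha,\beta}}$ fills the $\mathfrak{st}^5_1$ cells of those three rows.

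What remains are a few further direct checks. On the positive side one verifies $\widehat{\widehat{W(2)}}\models\mathfrak{st}^5_1$ and $\widetilde{W(2)}\models\mathfrak{st}^4_1,\mathfrak{st}^4_2$ (the last two giving the $n=5$ cells of the $\widetilde{W(2)}$ row by (ii)). Every other entry is a $\xmark$ and is certified by an explicit tuple: $5$-tuples for $\mathfrak{st}^5_1$ in $W(2)$ and in $\widehat{W(2)}$, and $4$-tuples for $\mathfrak{st}^4_1$ and $\mathfrak{st}^4_2$ in $\overline{W(2)},\overline{\mathcal S_1},\overline{\mathcal S_5},\widehat{W(2)},\widehat{\widehat{W(2)}}$; the degree-$3$ failures are then automatic. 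The search for such tuples stays small because all these algebras are $\Z$-graded, with $e_1,e_5,e_8$ (and $e_1+\alpha e_8$, $e_5+\beta e_8$) homogeneous of degree $0$, $e_2$ of degree $-1$, $e_3,e_6,e_7$ of degree $1$, and $e_4$ of degree $2$: in $\mathfrak{st}^n_i(e_{i_1},\dots,e_{i_n})$ every monomial lies in the component of degree $\sum_j\deg e_{i_j}$, so most tuples vanish immediately and the rest reduce to summing a handful of scalars.

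The genuinely delicate case is the two-parameter family $\overline{\mathcal S_{\alpha,\beta}}$: here one evaluates $\mathfrak{st}^4_1$ and $\mathfrak{st}^4_2$ on the basis tuples of its parametric multiplication table from the previous section, collects the resulting polynomials in $\alpha,\beta$, and must show that in each case their common zero locus is exactly the line $2\alpha=3+\beta$ — which passes through $(2,1)$, consistently with the degree-$3$ analysis above, and through $(0,-3)$. Carrying out this elimination, and checking that no accidental identity slips in at a special value of $(\alpha,\beta)$, is the main obstacle; everything else is a finite, mechanical check of the kind used throughout the paper. Once it is done, the $n=5$ cells of the $\overline{\mathcal S_{\alpha,\beta}}$ row follow — on the line $2\alpha=3+\beta$ by (ii) from $n=4$, off it by (iii) from $\overline{W(2)}\models\mathfrak{st}^5_1,\mathfrak{st}^5_2$ — and the table is complete. (The algebra $\widetilde{\widetilde{W(2)}}$ is omitted from the table because, by Proposition~\ref{prop_id4}, it already satisfies $\mathfrak{st}^3_1$ and $\mathfrak{st}^3_2$, hence every $\mathfrak{st}^n_i$ with $n\ge 3$, so its row would be entirely $\cmark$.)
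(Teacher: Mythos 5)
The paper offers no argument for this proposition beyond the implicit one used for its neighbours (direct verification of each entry with a computer algebra system), so your proposal is necessarily a different route, and it is a sound one. Your three reductions are all correct: $\mathfrak{st}^n_i$ is multilinear and alternating, so in characteristic zero it suffices to test it on $n$-element subsets of a basis; the consequence relation $\mathfrak{st}^n_i\Rightarrow\mathfrak{st}^{n+1}_i$ (stated in the paper) turns each row of the table into a threshold problem; and a multilinear identity cuts out a Zariski-closed, $\GL$-invariant set of structure tensors, hence passes from $W(2)$ to all of its IW contractions and from $\overline{W(2)}$ to $\overline{\mathcal S_1},\overline{\mathcal S_5},\overline{\mathcal S_{\alpha,\beta}}$. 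Your cross-checks against Propositions \ref{prop_id1}--\ref{prop_id4} are also right; in particular the expansion $\mathfrak{st}^3_2=-\bigl[(x_1(x_2x_3)-x_2(x_1x_3))+(x_2(x_3x_1)-x_3(x_2x_1))+(x_3(x_1x_2)-x_1(x_3x_2))\bigr]$ is correct and does place $\mathfrak{st}^3_2$ in the span of Proposition \ref{prop_id2}, while the purely left-normed basis there excludes $\mathfrak{st}^3_1$; and both exceptional parameter pairs $(2,1)$ and $(0,-3)$ do lie on the line $2\alpha=3+\beta$, as your heredity argument requires. The $\Z$-grading you exhibit (degrees $0,-1,1,2$ for $e_1,e_5,e_8$; $e_2$; $e_3,e_6,e_7$; $e_4$) is consistent with the multiplication table and is a genuinely useful device for pruning the witness search that the paper does not mention. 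What your approach buys is a large reduction in the number of independent machine checks and several internal consistency constraints on the table (e.g.\ the all-$\cmark$ column for $\mathfrak{st}^5_2$ follows from the single fact $W(2)\models\mathfrak{st}^5_2$); what it does not eliminate --- and you say so honestly --- is the residual finite computation, above all the elimination showing that the common zero locus of the $\mathfrak{st}^4_i$-polynomials in $(\alpha,\beta)$ is exactly the line $2\alpha=3+\beta$, and the explicit nonvanishing witnesses for every $\xmark$. Those are mechanical and of exactly the kind the paper itself delegates to Wolfram, so I regard your proposal as a correct, more structured reorganisation of the same verification rather than as containing a gap.
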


\begin{corollary}
${\mathfrak S}^4_{ W(n)}=0$ and 
${\mathfrak S}^5_{ W(2)}\neq0$.

\end{corollary}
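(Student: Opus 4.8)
We begin with the second assertion, which needs essentially no work. By Proposition~\ref{stprop} the algebra $W(2)$ satisfies the multilinear identity $\mathfrak{st}^5_2$, and $\mathfrak{st}^5_2$ is a nonzero element of $\F\langle x_1,\dots,x_5\rangle$: the $120$ monomials $x_{\sigma(5)}(x_{\sigma(4)}(x_{\sigma(3)}(x_{\sigma(2)}x_{\sigma(1)})))$ with $\sigma\in\mathbb S_5$ are pairwise distinct and each occurs with coefficient $\pm 1$. Hence $0\ne\mathfrak{st}^5_2\in{\mathfrak S}^5_{W(2)}$, so ${\mathfrak S}^5_{W(2)}\ne 0$.

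For the first assertion the plan is to exhibit $W(2)$ as a subalgebra of $W(n)$ for every $n\ge 2$ and then to invoke Proposition~\ref{prop_id1}. More generally, one should check that $W(m)$ embeds into $W(n)$ whenever $2\le m\le n$. To do this I would fix a direct sum decomposition $V_n=V_m\oplus U$ with $\dim V_m=m$ and $\dim U=n-m$, chosen so that the distinguished vector $e$ of $V_n$ lies in $V_m$, and let $\pi\colon V_n\to V_m$ be the projection along $U$. To a bilinear map ${\bf A}\colon V_m\times V_m\to V_m$ (an element of $W(m)$) I would associate the bilinear map $\tilde{\bf A}\colon V_n\times V_n\to V_n$ defined by $\tilde{\bf A}(x,y)={\bf A}(\pi x,\pi y)$. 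The assignment ${\bf A}\mapsto\tilde{\bf A}$ is manifestly linear and injective into $W(n)$; the content is that it respects the Kantor product.

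The main, and essentially only nontrivial, step is therefore to verify $\widetilde{{\bf A}\cdot{\bf B}}=\tilde{\bf A}\cdot\tilde{\bf B}$ for all ${\bf A},{\bf B}\in W(m)$. First I would observe that $L_e^{\tilde{\bf A}}(x)=\tilde{\bf A}(e,x)={\bf A}(e,\pi x)=L_e^{\bf A}(\pi x)$, so that $L_e^{\tilde{\bf A}}$ takes values in $V_m$, restricts to $L_e^{\bf A}$ on $V_m$, vanishes on $U$, and satisfies $\pi\circ L_e^{\tilde{\bf A}}=L_e^{\bf A}\circ\pi$. Substituting this into
\[
(\tilde{\bf A}\cdot\tilde{\bf B})(x,y)=L_e^{\tilde{\bf A}}\bigl(\tilde{\bf B}(x,y)\bigr)-\tilde{\bf B}\bigl(L_e^{\tilde{\bf A}}(x),y\bigr)-\tilde{\bf B}\bigl(x,L_e^{\tilde{\bf A}}(y)\bigr),
\]
and using that $\tilde{\bf B}$ depends on its arguments only through $\pi$, the right-hand side collapses to $L_e^{\bf A}({\bf B}(\pi x,\pi y))-{\bf B}(L_e^{\bf A}(\pi x),\pi y)-{\bf B}(\pi x,L_e^{\bf A}(\pi y))=({\bf A}\cdot{\bf B})(\pi x,\pi y)=\widetilde{{\bf A}\cdot{\bf B}}(x,y)$, as wanted. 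Granting this, $W(2)$ is a subalgebra of $W(n)$, so every multilinear polynomial vanishing on $W(n)$ vanishes on that subalgebra; thus ${\mathfrak S}^4_{W(n)}\subseteq{\mathfrak S}^4_{W(2)}$, and the latter is $0$ by Proposition~\ref{prop_id1}, giving ${\mathfrak S}^4_{W(n)}=0$. I expect the homomorphism verification to be the only place requiring genuine care; the rest is bookkeeping, and the case $n=2$ is already contained in Proposition~\ref{prop_id1}.
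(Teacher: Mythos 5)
Your proof is correct and follows the route the paper intends: the corollary is stated without proof, with ${\mathfrak S}^5_{W(2)}\neq 0$ coming from the $\mathfrak{st}^5_2$ entry for $W(2)$ in Proposition~\ref{stprop} and ${\mathfrak S}^4_{W(n)}=0$ (for $n\ge 2$) coming from $\dim{\mathfrak S}^4_{W(2)}=0$ in Proposition~\ref{prop_id1} together with the standard embedding $W(2)\hookrightarrow W(n)$. Your explicit check that ${\bf A}\mapsto {\bf A}\circ(\pi\times\pi)$, with the distinguished vector $e$ placed in $V_m$, is an injective homomorphism for the Kantor product is precisely the detail the paper leaves implicit, and it is sound.
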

The present corollary gives the following question.

\begin{question}
Find minimal $k,$ such that ${\mathfrak S}^k_{ W(n)}\neq0$.
In this case, 
is $ W(n)$ satisfying $\mathfrak{st}^k_2$?
\end{question}

\subsection{Other degree five identities for $W(2)$}

In this subsection we are interested in finding other degree five identities for $W(2)$. Consider the set of free monomials  $w(x_1,x_2, x_3, x_4,x_5)$ of degree five up to permutations of the variables. There are exactly fourteen monomials:
\begin{longtable}{llll}
$w_1(x_1,x_2,x_3,x_4,x_5)=(((x_{1}x_{2})x_{3})x_{4})x_{5}$&
$w_2(x_1,x_2,x_3,x_4,x_5)=((x_{1}x_{2})x_{3})(x_{4}x_{5})$\\
$w_3(x_1,x_2,x_3,x_4,x_5)=((x_{1}x_{2})(x_{3}x_{4}))x_{5}$&
$w_4(x_1,x_2,x_3,x_4,x_5)=(x_{1}x_{2})((x_{3}x_{4})x_{5})$\\
$w_5(x_1,x_2,x_3,x_4,x_5)=(x_{1}x_{2})(x_{3}(x_{4}x_{5}))$&
$w_6(x_1,x_2,x_3,x_4,x_5)=((x_{1}(x_{2}x_{3}))x_{4})x_{5}$\\
$w_7(x_1,x_2,x_3,x_4,x_5)=(x_{1}(x_{2}x_{3}))(x_{4}x_{5})$&
$w_8(x_1,x_2,x_3,x_4,x_5)=(x_{1}((x_{2}x_{3})x_{4}))x_{5}$\\
$w_9(x_1,x_2,x_3,x_4,x_5)=x_{1}(((x_{2}x_{3})x_{4})x_{5})$&
$w_{10}(x_1,x_2,x_3,x_4,x_5)=x_{1}((x_{2}x_{3})(x_{4}x_{5}))$\\
$w_{11}(x_1,x_2,x_3,x_4,x_5)=(x_{1}(x_{2}(x_{3}x_{4})))x_{5}$&
$w_{12}(x_1,x_2,x_3,x_4,x_5)=x_{1}((x_{2}(x_{3}x_{4}))x_{5})$\\
$w_{13}(x_1,x_2,x_3,x_4,x_5)=x_{1}(x_{2}((x_{3}x_{4})x_{5}))$&
$w_{14}(x_1,x_2,x_3,x_4,x_5)=x_{1}(x_{2}(x_{3}(x_{4}x_{5})))$\\    
\end{longtable}
Now, consider the $\mathbb{F}$-vector spaces ${\mathfrak Z}^i$ generated by the set: $$\left\{w_i(x_{\sigma(1)},x_{\sigma(2)},x_{\sigma(3)},x_{\sigma(4)},x_{\sigma(5)}): \sigma \in \mathbb S_5 \right\}.$$
Denote by ${\mathfrak Z}^i_{\bf A}$ the subspace of ${\mathfrak Z}^i$ of  all $5$-linear polynomials vanishing on $\bf A$. Then we have the following result regarding the dimension of these subspaces:

\begin{prop}
If ${\bf A}=W(2)$, then $\textrm{dim}({\mathfrak Z}^i_{\bf A})=0$ for $1\leq i \leq 13$ and $\textrm{dim}({\mathfrak Z}^{14}_{\bf A})=5$. A basis of the $\F$-vector space ${\mathfrak Z}^{14}_{\bf A}$ is the set of identities: 

{\begin{enumerate}    
\item $\sum\limits_{\sigma \in \mathbb S_4} (-1)^{\sigma} x_{\sigma(2)}(x_{\sigma(3)}(x_{\sigma(4)}(x_{\sigma(5)}x_{1}))).$
\item $\sum\limits_{\sigma \in \mathbb S_4} (-1)^{\sigma} x_{\sigma(1)}(x_{\sigma(3)}(x_{\sigma(4)}(x_{\sigma(5)}x_{2}))).$
\item $\sum\limits_{\sigma \in \mathbb S_4} (-1)^{\sigma} x_{\sigma(1)}(x_{\sigma(2)}(x_{\sigma(4)}(x_{\sigma(5)}x_{3}))).$
\item $\sum\limits_{\sigma \in \mathbb S_4} (-1)^{\sigma} x_{\sigma(1)}(x_{\sigma(2)}(x_{\sigma(3)}(x_{\sigma(5)}x_{4}))).$
\item $\sum\limits_{\sigma \in \mathbb S_4} (-1)^{\sigma} x_{\sigma(1)}(x_{\sigma(2)}(x_{\sigma(3)}(x_{\sigma(4)}x_{5}))).$
    
\end{enumerate}}

Moreover, the linear combination with parameters $(1,-1,1,-1,1)$ is $\mathfrak{st}^5_2$.

\end{prop}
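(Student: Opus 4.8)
The plan is to reduce the claim to a finite linear-algebra computation over $\F$ and then verify the five basic identities and the final parameter identification directly. First I would set up the space $\mathfrak{Z}^{14}$ explicitly: it is spanned by the $120$ monomials $x_{\sigma(1)}(x_{\sigma(2)}(x_{\sigma(3)}(x_{\sigma(4)}x_{\sigma(5)})))$ for $\sigma\in\mathbb S_5$, i.e. all left-normed-from-the-right bracketings of a permutation of $x_1,\dots,x_5$. A generic element is a linear combination $w=\sum_{\sigma}c_\sigma\, x_{\sigma(1)}(x_{\sigma(2)}(x_{\sigma(3)}(x_{\sigma(4)}x_{\sigma(5)})))$ with $120$ unknown scalars $c_\sigma$. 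Imposing $w=0$ on $W(2)$ means evaluating $w$ on all $8^5$ tuples of basis vectors $e_{i_1},\dots,e_{i_5}$ (using the multiplication table of $W(2)$ given in the excerpt) and collecting the coefficient of each $e_k$; this yields a homogeneous linear system in the $c_\sigma$. The assertion $\dim(\mathfrak{Z}^{14}_{\bf A})=5$ is exactly the statement that this system has a $5$-dimensional solution space, and likewise $\dim(\mathfrak{Z}^i_{\bf A})=0$ for $1\le i\le 13$ is the same computation carried out for the other thirteen monomial shapes.

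Second, for the explicit basis I would exhibit the five listed polynomials and check that each lies in the solution space. Each of them has the form $\sum_{\sigma\in\mathbb S_4}(-1)^\sigma\, x_{\sigma(a_1)}(x_{\sigma(a_2)}(x_{\sigma(a_3)}(x_{\sigma(a_4)}x_j)))$ where $j$ is the ``frozen'' variable placed in the innermost-right slot and $\sigma$ ranges over permutations of the remaining four indices; the antisymmetrization over $\mathbb S_4$ is what forces vanishing on $W(2)$, since one checks that the inner operator $y\mapsto x_{\sigma(a_2)}(x_{\sigma(a_3)}(x_{\sigma(a_4)}\,y))$ applied after the innermost product, when fully antisymmetrized in four arguments, produces a $\mathfrak{st}^4_2$-type cancellation on $W(2)$ — this is consistent with the fact recorded in Proposition~\ref{stprop} that $W(2)$ satisfies $\mathfrak{st}^5_2$ but not $\mathfrak{st}^4_2$. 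Linear independence of the five polynomials is immediate: the $i$-th polynomial is the only one among the five whose monomials all have $x_i$ in the innermost-right position, so they live in complementary coordinate subspaces of $\mathfrak{Z}^{14}$ and no nontrivial combination can vanish. Combined with the rank count from the system, this shows the five polynomials are a basis.

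Third, for the ``moreover'' clause I would simply expand $\mathfrak{st}^5_2=\sum_{\sigma\in\mathbb S_5}(-1)^\sigma x_{\sigma(5)}(x_{\sigma(4)}(x_{\sigma(3)}(x_{\sigma(2)}x_{\sigma(1)})))$ and rewrite it as a sum over which index occupies the innermost-right slot: fixing $\sigma(1)=j$ and summing $(-1)^\sigma$ over the $4!$ arrangements of the remaining indices in the outer three slots reproduces, up to the sign $(-1)^{j-1}$ coming from moving $j$ into position, the $j$-th basis polynomial in the list. Collecting signs gives the coefficient vector $(1,-1,1,-1,1)$, as claimed.

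The main obstacle is purely computational rather than conceptual: the verification rests on a large but routine symbolic linear-algebra calculation over the structure constants of $W(2)$ — setting up the $120\times(8^5)$ (per basis vector $e_k$) evaluation and computing its rank, and doing the analogous thing for each $w_i$, $1\le i\le 13$. I would carry this out with a computer algebra system (the same Wolfram setup used for Propositions~\ref{prop_id1}--\ref{prop_id6}), and the only genuine care needed is bookkeeping: making sure the monomial shapes $w_1,\dots,w_{14}$ are correctly enumerated up to permutation of variables, and that the antisymmetrization signs $(-1)^\sigma$ in the five basis polynomials are tracked consistently when reconciling them with $\mathfrak{st}^5_2$.
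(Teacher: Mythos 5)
Your proposal is correct and matches the paper's (implicit) method: the paper offers no written argument for this proposition beyond the direct Wolfram verification used throughout Section 2, which is exactly the finite linear-algebra computation over the structure constants that you describe, and your supplementary observations (the five polynomials are supported on disjoint sets of the $120$ monomials according to which variable sits in the innermost-right slot, hence independent; grouping $\mathfrak{st}^5_2$ by $\sigma(1)=j$ yields the signs $(-1)^{j-1}$, i.e. $(1,-1,1,-1,1)$) are sound. The only caveat is that your ``$\mathfrak{st}^4_2$-type cancellation'' heuristic should not be leaned on — $W(2)$ fails $\mathfrak{st}^4_2$, and the vanishing of each individual basis element is really the statement that the standard degree-$4$ identity holds for the left-multiplication operators of $W(2)$, which still requires the direct check you already propose.
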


\subsection{Central extensions}
The notion of central extensions appeared in the study of Lie algebras, 
but it can be considered in an arbitrary variety of algebras (see, for example, \cite{klp}).
The calculation of central extensions of an algebra ${\bf A}$ of dimension $n$ from a certain variety of algebras
gives the classification of all algebras with $(k-n)$-dimensional annihilator, such that its factor algebra by the annihilator is isomorphic to ${\bf A}$ (see, for example, \cite{antonio}). These calculations are carried out by studying the cohomology, with respect to a polynomial identity, of the algebra ${\bf A}$. In this section, we are interested in the central extensions of the contractions and subalgebras of $W(2)$ considered in this paper. Some of these contractions and subalgebras have turned out to be terminal, i.e., they satisfy the terminal identity (degree four). The following result is about these particular algebras.

\begin{prop}\label{zcontractions}
There are no terminal central extensions of the terminal contractions of $W(2)$: $\overline{\mathcal{S}_1}$, $\overline{\mathcal{S}_{-1,1}}$, $\overline{\mathcal{S}_{0,0}}$, $\widehat{W(2)}$, $\widehat{\widehat{W(2)}}$,  $\widetilde{W(2)}$ and $\widetilde{\widetilde{W(2)}}.$
\end{prop}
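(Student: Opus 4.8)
The plan is to prove Proposition~\ref{zcontractions} by the standard cohomological method for central extensions within a variety, specialised to the variety of terminal algebras. Recall that a terminal central extension of a terminal algebra ${\bf A}$ of dimension $n$ by a one-dimensional space is governed by the second cohomology group $H^2_{\mathrm{Term}}({\bf A},\F)$ modulo the action of the automorphism group: concretely, one considers the space $Z^2_{\mathrm{Term}}({\bf A},\F)$ of bilinear maps $\theta:{\bf A}\times{\bf A}\to\F$ such that the algebra ${\bf A}_\theta={\bf A}\oplus\F z$ with product $x\cdot y = xy + \theta(x,y)z$, $x\cdot z = z\cdot x = 0$, again satisfies the terminal identity; one quotients by the coboundaries $B^2_{\mathrm{Term}}({\bf A},\F)=\{\theta_f : f\in{\bf A}^*\}$ where $\theta_f(x,y)=f(xy)$; and a nontrivial terminal central extension exists precisely when there is a class in $H^2_{\mathrm{Term}}({\bf A},\F)$ whose radical (in the sense of the annihilator of the extended algebra) does not contain $z$. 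So the claim ``there are no terminal central extensions'' means exactly that every cocycle in $Z^2_{\mathrm{Term}}({\bf A},\F)$ is a coboundary, i.e. $H^2_{\mathrm{Term}}({\bf A},\F)=0$, for each of the seven algebras listed.

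The first step is to write down, for each of the seven terminal algebras, the linear system defining $Z^2_{\mathrm{Term}}({\bf A},\F)$: plug the generic bilinear form $\theta=\sum_{i\le j}\, c_{ij}\, e_i^*\otimes e_j^*$ (together with the skew part, so $64$ unknowns for the $8$-dimensional algebras) into the terminal identity~(\ref{tojdestvo_glavnoe}) with $F(a,b)=\tfrac13(2ab+ba)$ evaluated in ${\bf A}_\theta$, and collect the coefficient of $z$ in each monomial; this is a homogeneous linear system in the $c_{ij}$. The second step is to compute $B^2_{\mathrm{Term}}({\bf A},\F)$, which is immediate from the multiplication table: it is the span of the linear functionals $(x,y)\mapsto f(xy)$ as $f$ ranges over ${\bf A}^*$, a subspace of dimension equal to $\dim{\bf A}-\dim(\text{Ann}\,{\bf A})$ or thereabouts, read off directly from the tables given in the previous section. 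The third step is to verify that $\dim Z^2_{\mathrm{Term}}({\bf A},\F)=\dim B^2_{\mathrm{Term}}({\bf A},\F)$, equivalently that the solution space of the cocycle system coincides with the coboundary subspace; then $H^2_{\mathrm{Term}}({\bf A},\F)=0$ and no nontrivial terminal central extension exists. Since all the multiplication tables are explicit and sparse (many products vanish), each of these linear-algebra computations is mechanical and can be carried out with a computer algebra system, as the paper already does for the terminal and conservative identities (cf. the Wolfram verifications and the code of~\cite{kadyrov}).

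The main obstacle is bookkeeping rather than conceptual: one must be careful that the relevant cohomology is taken in the variety of terminal algebras and not the full nonassociative variety, so the cocycle condition is the linearisation of the \emph{terminal} identity, not merely multiplicativity; and one must correctly identify, among the solutions, which are coboundaries — this is where the explicit form $F(a,b)=\tfrac13(2ab+ba)$ and the annihilators of the seven algebras enter. A secondary subtlety is that a priori one should also rule out extensions by higher-dimensional centres, but this reduces to the one-dimensional case by the usual inductive argument (a terminal central extension with $k$-dimensional kernel is built from one-dimensional ones, each of which must be trivial), so it suffices to show $H^2_{\mathrm{Term}}({\bf A},\F)=0$ in the one-dimensional case. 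Once the seven computations are tabulated, the proposition follows at once.

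\begin{proof}
For a terminal algebra ${\bf A}$ of dimension $n$ over $\F$, a one-dimensional terminal central extension is the datum of a bilinear map $\theta\colon{\bf A}\times{\bf A}\to\F$ such that the algebra ${\bf A}_\theta={\bf A}\oplus\F z$, with multiplication $x\cdot y=xy+\theta(x,y)z$ and $x\cdot z=z\cdot x=0$, again satisfies the terminal identity~(\ref{tojdestvo_glavnoe}) with $F(a,b)=\tfrac13(2ab+ba)$; it is trivial (i.e. ${\bf A}_\theta\cong{\bf A}\oplus\F z$ as algebras with $z$ in the annihilator split off) exactly when $\theta(x,y)=f(xy)$ for some $f\in{\bf A}^*$. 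Writing $Z^2_{\mathrm{Term}}({\bf A},\F)$ for the space of such $\theta$ and $B^2_{\mathrm{Term}}({\bf A},\F)=\{(x,y)\mapsto f(xy):f\in{\bf A}^*\}$ for the coboundaries, a nontrivial terminal central extension of ${\bf A}$ exists only if $H^2_{\mathrm{Term}}({\bf A},\F)=Z^2_{\mathrm{Term}}({\bf A},\F)/B^2_{\mathrm{Term}}({\bf A},\F)\neq 0$ (one also needs a cocycle whose radical avoids $z$, but this is vacuous once the quotient vanishes). Moreover, a terminal central extension with kernel of dimension $k>1$ is obtained by iterating one-dimensional terminal central extensions, so it suffices to treat the one-dimensional case.

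Substituting the generic form $\theta$ (with $n^2$ scalar unknowns) into the terminal identity for ${\bf A}_\theta$ and extracting the coefficient of $z$ in every multilinear monomial yields a homogeneous linear system on the unknowns; its solution space is $Z^2_{\mathrm{Term}}({\bf A},\F)$. For each of the seven algebras $\overline{\mathcal{S}_1}$, $\overline{\mathcal{S}_{-1,1}}$, $\overline{\mathcal{S}_{0,0}}$, $\widehat{W(2)}$, $\widehat{\widehat{W(2)}}$, $\widetilde{W(2)}$ and $\widetilde{\widetilde{W(2)}}$ the multiplication table is explicit (and sparse), so both this system and the coboundary subspace $B^2_{\mathrm{Term}}({\bf A},\F)$ are computed directly, exactly as in the verifications of the terminal and conservative identities elsewhere in this paper (a modification of the Wolfram code of~\cite{kadyrov}). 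In each of the seven cases one finds $Z^2_{\mathrm{Term}}({\bf A},\F)=B^2_{\mathrm{Term}}({\bf A},\F)$, hence $H^2_{\mathrm{Term}}({\bf A},\F)=0$, so ${\bf A}$ admits no nontrivial one-dimensional terminal central extension, and by the reduction above no nontrivial terminal central extension at all.
\end{proof}
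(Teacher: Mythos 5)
Your proposal is correct and follows essentially the same route as the paper: both reduce the statement to showing that the second cohomology with respect to the terminal identity, $H^2_T({\bf A},\mathbb{F})=Z^2_T({\bf A},\mathbb{F})/B^2({\bf A},\mathbb{F})$, vanishes for each of the seven algebras, and both establish this by a direct (computer-assisted) linear-algebra computation from the explicit multiplication tables. Your additional remarks on the reduction to one-dimensional kernels and on the cocycle condition being the linearisation of the terminal identity are consistent with, and merely elaborate on, the paper's argument.
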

\begin{proof}
Recall that if $Z^{2}_P\left( {\bf A},{\mathbb{F}}\right)$ denotes the space of cocycles with respect to the polynomial identity $P$ of the algebra ${\bf A}$, $B^{2}\left( {\bf A},{\mathbb{F}}\right)$ denotes the space of coborders of the algebra ${\bf A}$ and $H^{2}_P\left( {\bf A},{\mathbb{F}}\right):=Z^{2}_P\left( {\bf A},{\mathbb{F}}\right)/B^{2}\left( {\bf A},{\mathbb{F}}\right)$ denotes the cohomology space with respect to the polynomial identity $P$ of the algebra ${\bf A}$, then if $H^{2}_P\left( {\bf A},{\mathbb{F}}\right)$ is trivial, we have that ${\bf A}$ has no central extensions for the identity $P$. Now, fix $P=T$ the terminal identity. Thus, the result is proven by direct calculation of the cohomology space, obtaining that $H^{2}_T\left( {\bf A},{\mathbb{F}}\right)$ is trivial for any of the terminal contractions ${\bf A}$ considered.
\end{proof}

\begin{prop}
There are no terminal central extensions of the terminal subalgebras of $W(2)$: $B_2, W_2, C_2, S_2, D_2, E_2$.
\end{prop}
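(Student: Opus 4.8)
The plan is to mirror exactly the approach of Proposition~\ref{zcontractions}: fix $P=T$ the terminal identity, and for each of the six terminal subalgebras ${\bf A}\in\{B_2, W_2, C_2, S_2, D_2, E_2\}$ compute the cohomology space $H^{2}_T\left({\bf A},{\mathbb F}\right)=Z^{2}_T\left({\bf A},{\mathbb F}\right)/B^{2}\left({\bf A},{\mathbb F}\right)$ and verify that it is trivial; triviality of $H^{2}_T$ immediately implies the absence of terminal central extensions. First I would note that all six algebras are indeed terminal by Proposition~\ref{prop_id5} (and $W_2,S_2$ are terminal from Kantor's classification, as recalled in the introduction), so the terminal identity is the correct choice of $P$. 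Then, for each ${\bf A}$, I would set up an arbitrary symmetric-in-the-appropriate-sense bilinear form $\theta:{\bf A}\times{\bf A}\to{\mathbb F}$, impose the cocycle condition coming from the terminal identity $T$ (a system of linear equations in the entries $\theta(e_i,e_j)$), compute the space of solutions $Z^2_T$, and quotient by the coboundaries $B^2=\{\theta_f : f\in{\bf A}^*\}$ where $\theta_f(x,y)=f(xy)$.

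The verification is a finite linear-algebra computation over ${\mathbb F}$ of the same nature as the one already carried out for the contractions, and can be performed with the same Wolfram code; since the dimensions involved are small ($2\le\dim{\bf A}\le 7$), each computation is routine. The one point requiring attention is that the terminal cocycle condition for a non-Lie, non-associative variety is not a single classical $2$-cocycle equation but is derived from the full multilinear terminal identity $T$ by the standard Kantor/KlP recipe (linearizing $T$ in the "extra" slot and substituting the perturbed product $xy+\theta(x,y)z$): one must be careful to write down all the resulting linear constraints correctly, including the terms where $\theta$ appears twice, which vanish since $z$ is central. Having assembled the correct system, solving it and comparing $\dim Z^2_T$ with $\dim B^2$ gives the result.

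The main obstacle — to the extent there is one — is purely bookkeeping: making sure the terminal cohomology differential is implemented correctly for each multiplication table, and that the coboundary space is computed with the correct product (for each of $B_2, C_2, D_2, E_2$ one first fixes a convenient basis and its structure constants, read off as subalgebras of $W(2)$ or of the contractions as indicated earlier in the paper). Once the machinery of Proposition~\ref{zcontractions} is in place, this proposition is obtained by rerunning it on six more multiplication tables, and I would simply state that a direct calculation yields $H^2_T\left({\bf A},{\mathbb F}\right)=0$ in each case, whence none of $B_2, W_2, C_2, S_2, D_2, E_2$ admits a terminal central extension.
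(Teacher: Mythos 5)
Your proposal is correct and follows exactly the route of the paper: for each of the six terminal subalgebras one computes the cohomology space $H^{2}_T\left({\bf A},{\mathbb F}\right)$ with respect to the terminal identity by a direct (computer-assisted) linear-algebra calculation and checks that it is trivial, which is precisely what the paper's one-line proof asserts. The extra care you describe in setting up the terminal cocycle condition is a reasonable elaboration of the same argument, not a different one.
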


\begin{proof}
The result follows by the direct calculation of the cohomology space, obtaining that $H^{2}_T\left( {\bf A},{\mathbb{F}}\right)$ is trivial for any of the terminal subalgebras ${\bf A}$ considered.
\end{proof}

Similarly, we can determine if there are central extensions for the rest of identities mentioned in the previous section.

\begin{prop}
By calculating the correspoding cohomology space, we conclude the following.
\begin{enumerate}
\item There are no central extensions of $\overline{\mathcal{S}_{2,1}}$ in the variety defined by one identity from the proposition \ref{prop_id2}.

\item $\textrm{dim}\, Z^{2}_P\left(\overline{ \mathcal{S}_{0,-3}},{\mathbb{F}}\right)=31$ and $\textrm{dim}\, H^{2}_P\left( \overline{\mathcal{S}_{0,-3}},{\mathbb{F}}\right)=23$, where $P$ is the identity 
$2 \mathfrak{st}^3_1 -3 \mathfrak{st}^3_2$ from proposition \ref{prop_id3}.

\item There are no central extensions of $S_2$ in the variety defined by one identity from the proposition \ref{prop_id6}.

\end{enumerate}
\end{prop}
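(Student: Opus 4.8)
The plan is to handle all three claims by the same standard computation of the second cohomology with respect to a polynomial identity, as in the proof of Proposition \ref{zcontractions}. Fix one of the algebras $\mathbf{A}$ involved — it is $8$-dimensional in (1) and (2) and $4$-dimensional in (3) — together with one of the degree-three multilinear identities $P$ attached to it, noting that $\mathbf{A}$ satisfies $P$ by Propositions \ref{prop_id2}, \ref{prop_id3} and \ref{prop_id6}. A one-dimensional central extension of $\mathbf{A}$ inside the variety $\mathrm{Var}(P)$ is, up to isomorphism, the space $\mathbf{A}\oplus\F\mathfrak{z}$ with product $x\cdot y=xy+\theta(x,y)\mathfrak{z}$, where $\mathfrak{z}$ is central and $\theta\colon\mathbf{A}\times\mathbf{A}\to\F$ is bilinear; the requirement that this product satisfy $P$ is precisely $\theta\in Z^2_P(\mathbf{A},\F)$. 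The first step is to write down the linear system cutting out $Z^2_P$: because $P$ is multilinear of degree three, substituting the extended product and reading off the $\mathfrak{z}$-coordinate turns every monomial of shape $(x_ix_j)x_k$ into $\theta(e_ie_j,e_k)$ and every monomial of shape $x_i(x_jx_k)$ into $\theta(e_i,e_je_k)$ — the $\mathbf{A}$-component of $P$ vanishing since $\mathbf{A}$ satisfies $P$, and $\mathfrak{z}$ killing every product it enters. Expanding by bilinearity in terms of the structure constants of $\mathbf{A}$ gives a homogeneous linear system in the $(\dim\mathbf{A})^2$ unknowns $\theta_{ij}=\theta(e_i,e_j)$, one equation per triple of basis vectors.

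The second step is to identify the coboundaries. For $f\in\mathbf{A}^*$ the bilinear form $\theta_f(x,y)=f(xy)$ always lies in $Z^2_P$, because the extension it defines is isomorphic to the split extension $\mathbf{A}\oplus\F$ (zero cocycle), which satisfies $P$ as $\mathbf{A}$ does; the assignment $f\mapsto\theta_f$ has kernel $\{f:f|_{\mathbf{A}^2}=0\}$, so $\dim B^2(\mathbf{A},\F)=\dim\mathbf{A}^2$. A glance at the multiplication tables shows that $e_1e_i$ is a nonzero scalar multiple of $e_i$ for every basis vector of $\overline{\mathcal{S}_{2,1}}$, of $\overline{\mathcal{S}_{0,-3}}$ and of $S_2$, so each of these algebras is perfect and $\dim B^2=\dim\mathbf{A}$ (namely $8$, $8$ and $4$). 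Therefore $\dim H^2_P(\mathbf{A},\F)=\dim Z^2_P(\mathbf{A},\F)-\dim\mathbf{A}$, and $H^2_P$ is trivial exactly when $\dim Z^2_P=\dim\mathbf{A}$.

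The last step is to solve the linear systems. For (1), with $P$ any one of the three identities of Proposition \ref{prop_id2} — all of them equivalent, since each is obtained from any other by permuting the free variables, so the choice is immaterial — I expect the system on $\overline{\mathcal{S}_{2,1}}$ to have an $8$-dimensional solution space, whence $H^2_P=0$ and there is no nontrivial central extension. For (2), with $P=2\,\mathfrak{st}^3_1-3\,\mathfrak{st}^3_2$, I expect $\dim Z^2_P(\overline{\mathcal{S}_{0,-3}},\F)=31$ and hence $\dim H^2_P=31-8=23$. For (3), with $P$ one of the three identities of Proposition \ref{prop_id6}, I expect the $16$-unknown system on $S_2$ to have a $4$-dimensional solution space, so again $H^2_P=0$. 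The hard part here is not conceptual but a matter of size and bookkeeping: each $8$-dimensional case amounts to row-reducing a homogeneous system with up to $8^3$ equations in $64$ unknowns, so the rank counts should be carried out and double-checked with Wolfram as elsewhere in the paper; the only things one must keep straight are the replacement rule for the $\mathfrak{z}$-coordinate and the perfectness of each algebra, after which everything reduces to a single rank computation per case.
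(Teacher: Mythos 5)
Your proposal is correct and follows essentially the same route as the paper: the paper offers no details beyond ``direct calculation of the cohomology space,'' and your reduction of $Z^2_P$ to a homogeneous linear system via the $\mathfrak{z}$-coordinate replacement rule, together with the observation that each algebra equals its own square (so $\dim B^2=\dim\mathbf{A}$ and $\dim H^2_P=\dim Z^2_P-\dim\mathbf{A}$), is exactly the standard machinery the authors invoke, with the remaining rank computations delegated to software just as in the rest of the paper. The stated numbers are internally consistent with this ($23=31-8$), and your remark that the three identities of Proposition \ref{prop_id2} are permutations of one another, hence define the same variety, is a correct and useful clarification.
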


Regarding the central extensions with respect to the identities $\mathfrak{st}^n_1$ and $\mathfrak{st}^n_2$ for $n=3,4,5$ (see Proposition \ref{prop_id5} and Proposition \ref{stprop}), we have the following result.

\begin{proposition}
\label{dims1}
The dimensions of the spaces of cocycles and coborders of the subalgebras of $W(2)$ are given.

{\begin{table}[H] \tiny
\renewcommand{\arraystretch}{1.5}
\rule{0pt}{4ex}    
\begin{tabular}{|c|c|c|c|c|c|c|c|c|}

\hline
Algebra  & $\textrm{dim}\, B^{2}$ & $\textrm{dim}\, Z^{2}_{\mathfrak{st}^3_1}$ &  $\textrm{dim}\,Z^{2}_{\mathfrak{st}^4_1}$ & $\textrm{dim}\,Z^{2}_{\mathfrak{st}^5_1}$ & $\textrm{dim}\,Z^{2}_{\mathfrak{st}^3_2}$ &  $\textrm{dim}\,Z^{2}_{\mathfrak{st}^4_2}$ & $\textrm{dim}\,Z^{2}_{\mathfrak{st}^5_2}$\\ \hline

$B_2$ &

7  &

- & 

- &

44 &

- &

- &

44 \\ \hline
$W_2$ &

6  &

- & 

- &

- &

- &

- &

30 \\ \hline
$C_2$ &

5  &

- & 

- &

24 &

- &

- &

24 \\ \hline
$S_2$ &

4  &

- & 

- &

16 &

- &

- &

16 \\ \hline
$D_2$ &

3  &

8 & 

9 &

9 &

8 &

9 &

9 \\ \hline
$E_2$ &

2  &

4 & 

4 &

4 &

4 &

4 &

4 \\ \hline

\end{tabular}%
\label{summ5}%
\end{table}}

\end{proposition}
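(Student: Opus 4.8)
The plan is to reduce the statement to a finite list of finite-dimensional linear-algebra problems and then carry out the corresponding rank computations (we use Wolfram Mathematica, as in the preceding propositions). Throughout, for a finite-dimensional algebra ${\bf A}$ we write $C^1=\mathrm{Hom}_\F({\bf A},\F)$, $C^2=\mathrm{Hom}_\F({\bf A}\otimes{\bf A},\F)$, and $\delta\colon C^1\to C^2$, $(\delta f)(x,y)=f(xy)$, for the coboundary, so that $B^2({\bf A},\F)=\mathrm{im}\,\delta$.

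\textbf{The coboundaries.} Since $\delta f=0$ precisely when $f$ annihilates ${\bf A}^2=\langle xy:x,y\in{\bf A}\rangle$, we have $\dim B^2({\bf A},\F)=\dim{\bf A}^2$. For each of $B_2,W_2,C_2,S_2,D_2,E_2$ the element $e_1$ belongs to the subalgebra, and the first row of the multiplication table of $W(2)$ shows that $L_{e_1}$ sends every spanning vector $e_i$ to a nonzero scalar multiple of itself; hence $L_{e_1}$ restricts to an invertible operator and ${\bf A}^2={\bf A}$. This gives the column $\dim B^2=7,6,5,4,3,2$ directly.

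\textbf{The cocycles.} For a multilinear identity $P$ of degree $n$ and ${\bf A}\in\mathrm{var}(P)$, I would present $Z^2_P({\bf A},\F)$ as the kernel of an explicit linear map. A $2$-cocycle is a $\theta\in C^2$ for which the one-dimensional central extension ${\bf A}_\theta={\bf A}\oplus\F c$ --- with $c$ in the annihilator and $x\ast y:=xy+\theta(x,y)c$ --- again satisfies $P$; substituting $x\ast y$ into $P$, the ${\bf A}$-component equals $P$ evaluated in ${\bf A}$ and hence vanishes, so the condition reduces to the vanishing of the coefficient of $c$, an expression $\Phi_P(\theta;x_1,\dots,x_n)$ that is linear in $\theta$ and $n$-linear in $x_1,\dots,x_n$. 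Letting $x_1,\dots,x_n$ run over a basis of ${\bf A}$ turns $\Phi_P=0$ into a homogeneous linear system in the $(\dim{\bf A})^2$ unknowns $\theta_{ij}=\theta(e_i,e_j)$, whence $\dim Z^2_P({\bf A},\F)=(\dim{\bf A})^2-\mathrm{rank}$. Running this with $P=\mathfrak{st}^n_1$ and $P=\mathfrak{st}^n_2$ for $n=3,4,5$ --- expanding the sums over $\mathbb S_n$ with, respectively, the left-normed and the right-normed bracketing --- for each of the six subalgebras yields the remaining entries of the table; the symbol ``$-$'' is placed exactly in the positions where, by Proposition \ref{prop_id5} and Proposition \ref{stprop}, ${\bf A}\notin\mathrm{var}(P)$, so that no central extension of ${\bf A}$ in $\mathrm{var}(P)$ exists.

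\textbf{Obstacle and checks.} There is no conceptual difficulty here; the work is organisational and concentrated in the degree-five cases, where each of the $120$ signed monomials of $\mathfrak{st}^5_1$ and $\mathfrak{st}^5_2$ contributes several $c$-components after the extension, so the linear systems (of size up to $49$ unknowns, for $B_2$) must be assembled and reduced without slips. I would rely on two internal consistency checks: the monotonicity $\dim Z^2_{\mathfrak{st}^n_i}({\bf A},\F)\le\dim Z^2_{\mathfrak{st}^{n+1}_i}({\bf A},\F)$ forced by the implication $\mathfrak{st}^n_i\Rightarrow\mathfrak{st}^{n+1}_i$, and the fact that in every slot where both families apply the two cocycle dimensions coincide ($44=44$ for $B_2$, $24=24$ for $C_2$, $16=16$ for $S_2$, and the matching $Z^2$-columns for $D_2$ and $E_2$), in agreement with $D_2$ and $E_2$ satisfying both standard identities already in degree three.
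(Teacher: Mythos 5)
Your proposal is correct and matches the paper's (implicit) method: the paper establishes this table by direct machine computation of the relevant cocycle and coborder spaces, exactly the reduction to linear systems in the entries $\theta(e_i,e_j)$ that you describe, with the dashes marking the slots where ${\bf A}\notin\mathrm{var}(\mathfrak{st}^n_i)$ by Propositions \ref{prop_id5} and \ref{stprop}. Your clean hand-argument for the coborder column --- $\dim B^2=\dim{\bf A}^2=\dim{\bf A}$ because $L_{e_1}$ acts diagonally with nonzero eigenvalues on each listed subalgebra --- is a welcome addition the paper leaves unstated, and your consistency checks (e.g.\ the inclusions $Z^2_{\mathfrak{st}^n_i}\subseteq Z^2_{\mathfrak{st}^{n+1}_i}$) are all borne out by the table.
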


\begin{proposition}
\label{dims2}
The dimensions of the spaces of cocycles and coborders of the subalgebras of $W(2)$ are given.

{\begin{table}[H] \tiny
\renewcommand{\arraystretch}{1.5}
\rule{0pt}{4ex}    
\begin{tabular}{|c|c|c|c|c|c|c|c|c|}

\hline
Algebra & $\textrm{dim}\, B^{2}$ & $\textrm{dim}\, Z^{2}_{\mathfrak{st}^3_1}$ &  $\textrm{dim}\,Z^{2}_{\mathfrak{st}^4_1}$ & $\textrm{dim}\,Z^{2}_{\mathfrak{st}^5_1}$ & $\textrm{dim}\,Z^{2}_{\mathfrak{st}^3_2}$ &  $\textrm{dim}\,Z^{2}_{\mathfrak{st}^4_2}$ & $\textrm{dim}\,Z^{2}_{\mathfrak{st}^5_2}$\\ \hline

$W(2)$ &

8  &

- & 

- &

- &

- &

- &

38 \\ \hline

$\overline{W(2)}$ &

8  &
  
- & 

- &

54 &

- &

- &

54 \\ \hline

$\overline{\mathcal{S}_1}$ &

8  &
  
- & 

- &

60 &

- &

- &

60 \\ \hline

$\overline{\mathcal{S}_5}$ &

8  &
  
- & 

- &

60 &

- &

- &

60 \\ \hline

$\overline{\mathcal{S}_{-1, 1}}$ &

8  &

- & 

- &

60 &

- &

- &

60 \\ \hline

$\overline{\mathcal{S}_{0, 0}}$ &

8  &

- & 

- &

60 &

- &

- &

60 \\ \hline

$\widehat{W(2)}$ &

8  &

- & 

- &

- &

- &

- &

40 \\ \hline

$\widehat{\widehat{W(2)}}$ &

8  &

- & 

- &

50 &

- &

- &

54 \\ \hline

${\widetilde{W(2)}}$ &

8  &

- & 

52 &

64 &

- &

52 &

64 \\ \hline

$\widetilde{\widetilde{W(2)}}$ &

8  &

43 & 

64 &

64 &

43 &

64 &

64 \\ \hline

\end{tabular}%
\label{summ4}%
\end{table}
}
\end{proposition}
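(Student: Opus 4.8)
The plan is to turn the computation of each space $Z^2_{\mathfrak{st}^n_i}({\bf A},\F)$ and of $B^2({\bf A},\F)$ into an explicit finite linear algebra problem over $\F$, and then to solve it by direct calculation with Wolfram, exactly as was done for the spaces ${\mathfrak S}^n_{\bf A}$ and ${\mathfrak Z}^i_{\bf A}$ earlier in the paper.

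First I would fix an algebra ${\bf A}$ from the table, with basis $e_1,\dots,e_8$, and recall the cochain picture for central extensions: a $2$-cochain is a bilinear form $\theta\colon{\bf A}\times{\bf A}\to\F$, encoded by its $64$ scalars $\theta(e_i,e_j)$, and the associated one-dimensional central extension is ${\bf A}_\theta={\bf A}\oplus\F z$ with product $x\ast y=xy+\theta(x,y)z$, where $e_i\ast z=z\ast e_i=0$ and $z\ast z=0$. For a multilinear identity $P\in\{\mathfrak{st}^n_1,\mathfrak{st}^n_2\}$, the condition $\theta\in Z^2_P({\bf A},\F)$ is equivalent to ${\bf A}_\theta$ satisfying $P$; since $P$ is multilinear of degree $n$ it suffices to evaluate it on basis $n$-tuples. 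Expanding each monomial of $P(e_{i_1},\dots,e_{i_n})$ using $x\ast y=xy+\theta(x,y)z$ and the fact that $z$ annihilates everything, the ${\bf A}$-component of the result is simply the value of $P$ in ${\bf A}$, which vanishes because ${\bf A}$ satisfies $P$ --- this is precisely the data recorded in Proposition \ref{stprop}, and a column of the present table is filled in exactly when the corresponding \cmark\ appears there --- while the $z$-component is a single $\F$-linear equation in the unknowns $\theta(e_i,e_j)$. Hence $Z^2_P({\bf A},\F)$ is the solution space of an explicit homogeneous linear system in $64$ unknowns, and its dimension is the corank of the coefficient matrix. For the coborders, $B^2({\bf A},\F)=\{(x,y)\mapsto f(xy):f\in{\bf A}^*\}$ is the image of the map $f\mapsto f\circ\mu$, $\mu\colon{\bf A}\otimes{\bf A}\to{\bf A}$ being the multiplication, so $\dim B^2({\bf A},\F)=\dim{\bf A}^2$; inspection of each multiplication table shows that every $e_i$ lies in the image of some multiplication operator, hence ${\bf A}^2={\bf A}$ and $\dim B^2({\bf A},\F)=8$ for all ten algebras.

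Carrying out this procedure for each algebra in the table and, for each, for those $\mathfrak{st}^n_i$ with $n\le5$ that it satisfies (as listed in Proposition \ref{stprop}), and recording the coranks, produces the stated values. The only points requiring care are: (i) matching each entry of the table to the corresponding \cmark\ of Proposition \ref{stprop}, so that cocycle equations are extracted only from identities that genuinely hold on ${\bf A}$; and (ii) implementing correctly the substitution $x\ast y=xy+\theta(x,y)z$ inside $\mathfrak{st}^5_1$ and $\mathfrak{st}^5_2$, whose symmetrizations over $\mathbb S_5$ are sizeable. Neither is a conceptual obstacle --- each system has at most $64$ unknowns and at most $8^5$ defining instances --- but the bulk of the degree-five systems makes a hand computation impractical, and the ranks are obtained with a computer algebra system.
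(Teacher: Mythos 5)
Your proposal is correct and follows essentially the same route as the paper: the paper offers no written argument for this table beyond the "direct calculation of the cohomology space" invoked for the neighbouring propositions, and your reduction of each $Z^{2}_{P}$ to a homogeneous linear system in the $64$ unknowns $\theta(e_i,e_j)$ (extracted from the $z$-component of $P$ evaluated on basis tuples of the split extension), together with the observation that ${\bf A}^2={\bf A}$ forces $\dim B^{2}=8$ throughout, is exactly the computation the authors carry out in Wolfram.
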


By Proposition \ref{dims1} and Proposition \ref{dims2}, we can conclude that for any of the subalgebras and contractions considered there are central extensions with respect to the identities $\mathfrak{st}^n_1$ and $\mathfrak{st}^n_2$ for $n=3,4,5$.


 \section*{Acknowledgements}

{\bf Funding}
The first part of this work is supported by the Spanish Government through the Ministry of Universities grant `Margarita Salas', funded by the European Union - NextGenerationEU;
by the Junta de Andaluc\'{\i}a  through projects UMA18-FEDERJA-119  and FQM-336 and  by the Spanish Ministerio de Ciencia e Innovaci\'on   through project  PID 2019-104236GB-I00,  all of them with FEDER funds;
FCT   UIDB/00212/2020 and UIDP/00212/2020. 
The second part of this work is supported by the Russian Science Foundation under grant 22-11-00081.






\end{document}